\newcommand{\kahler}{K\"ahler\ }
\newcommand{\R}{{\mathbb R}}
\newcommand{\Z}{{\mathbb Z}}
\renewcommand{\d}{\partial}
\newcommand{\al}{\alpha}
\newcommand{\la}{\lambda}
\newcommand{\ep}{\varepsilon}
\newcommand{\half}{{\frac{1}{2}}}
\newtheorem{theo}{{\sc Theorem}}[section]
\numberwithin{equation}{section}
\theoremstyle{plain}
\newtheorem{lem}[theo]{Lemma}
\newtheorem{prop}[theo]{Proposition}
\newenvironment{rem}{\medskip\noindent{\it Remark:\/} }{\medskip}
\title[The Monge-Amp\`{e}re Equation with Guillemin Boundary Conditions]
{The Monge-Amp\`{e}re Equation with Guillemin Boundary Conditions}
\author{Daniel Rubin}
\address{Department of Mathematics, Columbia University, 2990 Broadway, New York, NY 10027}
\email{rubin@math.columbia.edu}
\begin{document}

\maketitle

\begin{abstract} 
Existence and boundary regularity away from the corners are established for two-dimensional Monge-Amp\`{e}re equations on convex polytopes with Guillemin boundary conditions.
An important step is to derive an expansion in terms of functions $y^n$ and $y^n\log y$ for solutions to equations of the form $\det D^2u(x,y) = y^{-1}$ in a half-ball.   
\end{abstract}

\section{Introduction}

The aim of this paper is to study a seemingly new type of boundary value problem for a real Monge-Amp\`{e}re equation in a convex polytope. 
More precisely,
let $P\subset{\bf R}^n$ be a polytope, and let
\begin{equation}
	P=\cap_{i=1}^N \{l_i(x)>0\},
\end{equation}
be a representation of $P$ as an intersection of half-planes, with $l_i(x)$ an affine function of $x$ for each $i$. We consider the problem of finding
a function $u\in C^0(\overline{P})$ and strictly convex satisfying 
\begin{eqnarray}
\label{cond1}	&&
\det D^2 u(x) ={1\over \varphi(x)}\\	
\label{cond2}	&&
u(x)-\sum_{i=1}^N l_i(x)\log l_i(x) \in C^\infty(\overline{P})
\end{eqnarray}
where the given function $\varphi $ on $P$ is of the form
\begin{equation}\label{cond5}
\varphi(x)=  h(x)\prod_{i=1}^N l_i(x)
\end{equation}
with $h(x)\in C^\infty(\overline{P})$, $0<h(x)$. Boundary conditions of the form (\ref{cond2}) are called Guillemin boundary conditions.

\medskip
The motivation for this problem comes from toric geometry, and particularly Abreu's equation \cite{A}, which is the equation for a K\"ahler metric of constant scalar curvature on a toric variety. We shall say more about this later, but for the moment, we note that an essential part of the problem is the particular form of the boundary condition, and the fact that the equation takes place on a polytope. For example, a naive version of the problem on a strictly convex domain $D\subset{\bf R}^2$ with boundary function $d(x)$ of the form $u-d\,\log\,d\in C^3(\overline{D})$, $\det D^2 u(x)=O(d^{-1})$, would have no solution, since the boundary asymptotics for $u$ would imply that $\det D^2 u(x)\sim d^{-1}\log\,d^{-1}$ near $\partial D$. Thus the polytope features of the problem have to be fully taken into account, and they play indeed a major role in our results which we describe next.

\medskip
Let $n_i$ be the unit, inward-pointing normal to the face $l_i(x)=0$ of the polytope $P$, and set $l_i(x) = n_i \cdot x -\lambda$. 
Any two vectors $n_i$ and $n_k$ define a matrix $n_i^\alpha n_k^\beta$, the determinant of which is the area of the parallelogram spanned by $n_i$ and $n_k$. 
We also denote the $N$ vertices of the polytope $P$ by $v_i$,
$1\leq i\leq N$, with $v_i$ the intersection of the faces $l_{i-1}=0$ and $l_i=0$.Then

\begin{theo}
\label{maintheo}
Let $P $ be a convex polytope in  $\R^2$, and consider the problem
(\ref{cond1}, \ref{cond2}) where $h\in C^\infty(\overline{P})$ and $h(x)>0$. 

{\rm (a)} If the equation admits a solution $u$ which is convex in $P$ and satisfies the boundary condition in (\ref{cond1}, \ref{cond2}), then the given function $h(x)$ must satisfy
\begin{eqnarray}
	\label{cond3}h(v_i) &=& \left(\det(n_{i-1}n_i)^2\prod_{j \neq i-1,i} l_j(v_i)\right)^{-1}
\end{eqnarray}

{\rm (b)} Conversely, assume that the given function $h(x)$ satisfies (\ref{cond3}).
Then there exists $\al>0$ such that for each choice of values $\{\al_i\}_{i=1}^N$, $a_i \in \R$, there is a unique solution $u \in C^\al(\overline{P})$ to the equation
(\ref{cond1}), satisfying the following boundary condition
\begin{equation}
u - \sum_{i=1}^N l_i(x)\log l_i(x) \in C^\infty(\overline{P}\setminus
\{v_1,\cdots,v_N\}),
\quad
{\rm and}
\ \ u(v_i)=\alpha_i,
\quad
1\leq\alpha\leq N.
\end{equation}
\end{theo} 

At this moment, the regularity of the solution at the corners is still open. 

\medskip
We discuss briefly some of the main steps in the proof of Theorem \ref{maintheo}. A key observation is that, if a solution $u$ exists, then 
its restriction to the edge $e_i$ is the solution of the following second-order ODE along the edge $e_i$,
\begin{equation}
\partial_{T_i}^2u = |n_i|^2/\varphi_{n_i}
\end{equation}
Combined with the assigned values of $u$ at the vertices $v_i$, this equation determines completely the restriction of $u$ to the boundary $\partial P$ of the polytope. Thus, we can obtain $u$ by solving the Monge-Amp\`ere equation (\ref{cond1}) with this given Dirichlet condition. Because the right hand side of the equation (\ref{cond1}) blows up near the boundary, and because the domain is a polytope, the solution does not appear to have been written down previously in the literature. However, we show in section \S 3 that the methods of Cheng-Yau \cite{CY} can be suitably extended to produce a generalized solution.

\smallskip
The remaining issue is the regularity. The $C^\alpha$ regularity on $\overline{P}$ is established by constructing suitable barrier functions. The regularity and asymptotic expansion at the edges are modeled on the following problem
\begin{equation}\label{modelFace}
\det D^2u(x',x_n) = x_n^{-1}
\end{equation}
near the interior of a face $\{x_n=0\}$, and
\begin{equation}\label{modelCorner}
\det D^2u = (x_1...x_n)^{-1}
\end{equation}
near a corner. The equation (\ref{modelFace}) is a limit case of the equations studied by Daskolopoulos and Savin in \cite{DS} (and in \cite{S} in higher dimensions) of the form
\begin{equation}
	\det D^2v(x,y) = y^\al \text{ in $B_1$, $\al>-1$}
\end{equation}
for which they obtained the behavior of the solution
\begin{equation}
	v(x,y) = \frac{1}{2a}x^2 + \frac{a}{(\al+2)(\al+1)}|y|^{2+\al} + O\left((x^2+|y|^{2+\al})^{1+\delta}\right)
\end{equation}
for some $a>0$, in a neighborhood of the origin. The case of exponent -1 presents a new difficulty from the fact that solutions with quadratic growth on the flat boundary have infinite normal derivative. In our case, we need to combine the techniques of \cite{DS} with a careful analysis of the partial Legendre transform of $u$ and of the Monge-Amp\`ere equation.

\medskip

We say now a few words about the motivation from toric geometry. Let $X$ be a toric variety of dimension $n$. Then its image under the moment map is a polytope $P$ in ${\bf R}^n$, and a toric K\"ahler metric on $X$ can be determined by 
a function $u:\bar{P}\rightarrow \R^n$ called the \textit{symplectic potential}, which is the  Legendre transform of the \kahler potential in the open torus. 
Guillemin \cite{G} showed that the symplectic potential of a smooth toric variety satisfies
\begin{equation}
	u(x)=\sum_{i=1}^N l_i(x)\log l_i(x) + f(x), \text{ $f \in C^\infty(\overline{P})$, $u$ convex in $\overline{P}$}
\end{equation}
where the affine functions $l_i(x)$ defining the faces of $P$ have been appropriately normalized. As shown by Abreu \cite{A}, the K\"ahler metric is an extremal metric if and only if its symplectic potential $u$ satisfies the so-called Abreu equation
\begin{equation}
\label{Abreu}
	\sum_{i,j=1}^n \frac{\d^2 u^{ij}}{\d x_i \d x_j} = -A,
\end{equation}
where $(u^{ij})$ is the inverse of the Hessian $(u_{ij})$
where $A$ is an affine function. The metric is of constant scalar curvature when $A$ is constant. The Abreu's equation is clearly equivalent to the following system of two second-order elliptic equations for the two unknowns $(u,\varphi)$,
\begin{eqnarray}
\label{Abreu1}
	\det D^2u = \varphi^{-1} \\
	U^{ij}\varphi_{ij} = -A
\end{eqnarray}
where $(U^{ij})$ is the cofactor matrix of the Hessian of $u$. From the boundary condition, it follows that the function $\varphi(x)$ must vanish to first order along each face. 

\smallskip
The existence of a metric of constant scalar curvature, and hence the solvability of Abreu's equation, has been shown by Donaldson in dimension $n=2$ to be equivalent to the K-stability of the toric variety $X$
\cite{D1}. The same statement is expected to hold in all dimensions, and is known as the Tian-Yau-Donaldson conjecture \cite{D2} (see also \cite{PS} for a survey). Donaldson also gave interior estimates for Abreu's equation, using in part works of Trudinger-Wang \cite{TW} on similar equations arising from the affine Plateau problem. Donaldson's results were subsequently extended by Chen, Li, and Sheng \cite{CLS}, who solved the problem of general prescribed curvatures in dimension two, and also by Chen, Han, Li, and Sheng \cite{CHLS} giving interior estimates for all dimensions.

\smallskip
But even in dimension $n=2$, a major question is to understand the singularities of the solutions of Abreu's equation in general. This is a difficult problem since Abreu's equation is of fourth-order, and it is natural as a first step to explore separately the two second-order equations appearing in (\ref{Abreu1}). The second equation is a linearized Monge-Amp\`ere equation of the type studied by Caffarelli-Gutierrez \cite{CG}. The first equation, together with the Guillemin boundary conditions, is a new type of boundary value problem for the Monge-Amp\`ere equation, which may be of independent interest and is the equation studied in the present paper.

\medskip

The paper is organized as follows: In section 2, we explain the setup and derive the necessary conditions on the right-hand side, as well as the boundary equation, which we solve to give Dirichlet data compatible with the Guillemin boundary conditions. In section 3, we give a Perr\'{o}n's method argument to solve the Dirichlet problem, ensuring that there exists a solution in the polytope which is H\"{o}lder continuous up to the boundary. In section 4, the main part of the paper, we deal with the behavior of the solution near an edge, establishing that under the precise boundary relation, the solution goes like $l_i\log l_i + f$, with $f$ smooth. This completes the proof of Theorem \ref{maintheo}. For the most part we work exclusively in dimension two. This restriction is mainly for simplicity of computation in sections two and three, where the results have clear extensions to higher dimensions, but is essential in section four where we take the partial Legendre transform of the Monge-Amp\`{e}re equation to yield a quasilinear equation.

\medskip
\textbf{Acknowledgements:} I would like to thank my advisor D.H. Phong for his guidance and encouragement, and I am also grateful to Ovidiu Savin and Connor Mooney for many helpful conversations.

\section{Consequences of the Guillemin boundary conditions}

In general, an asymptotic expansion for the solution $u$ near the boundary of a domain will put some constraints on the boundary behavior of ${\rm det}\,D^2u$. In the case of Guillemin boundary conditions on a polytope, these constraints turn out to be quite powerful. This is the contents of Theorem \ref{maintheo}, part (a), which we reformulate as the following separate proposition for convenience:

\begin{lem}
Let $u\in C^0(\overline{P})\cap C^\infty(\overline{P}\setminus
\{v_1,\cdots,v_N\})$ be a function which satisfies the Guillemin boundary condition
(\ref{cond2}) on $\overline{P}\setminus \{v_1,\cdots,v_N\}$ in the sense that
\begin{equation}
u(x)-\sum_{i=1}^N l_i(x)\log l_i(x)
\in C^0(\overline{P})\cap C^\infty(\overline{P}\setminus \{v_1,\cdots,v_N\}).
\end{equation} 
Then
\begin{equation}
	\det D^2u ={1\over h(x)\prod_{i=1}^N l_i(x)},
\end{equation}
where $h(x)$ is a function which is in $C^0(\overline{P})\cap C^\infty(\overline{P}\setminus \{v_1,\cdots,v_N\})$ 
and satisfies $0<h(x)$. When the full Guillemin boundary condition (\ref{cond2}) holds, then $h\in C^\infty(\overline{P})$. Furthermore,
\begin{equation}
	h(v_k) = \frac{1}{\det(n_{k-1}n_k)^2\prod_{j\neq k-1,k}l_j(v_k)}.
\end{equation}

\end{lem}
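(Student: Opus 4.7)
The plan is to plug in the ansatz $u = g + f$, with $g(x) = \sum_{i=1}^N l_i(x)\log l_i(x)$ the explicit singular piece and $f := u-g$ the regular remainder, and compute $\det D^2 u$ directly. Since $\partial_\al \partial_\be(l_i \log l_i) = n_i^\al n_i^\be/l_i$, one has
\[
D^2 u \;=\; \sum_{i=1}^N \frac{n_i n_i^T}{l_i} \;+\; D^2 f.
\]

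The main computation is then to expand the $2\times 2$ determinant. In dimension two $\det$ is a quadratic form on symmetric matrices, hence polarizes to a symmetric bilinear form $B$ with $B(M,M) = \det M$. Two identities do the work: $B(n n^T, n' n'^T) = \half \det(n, n')^2$ (quickest check: $n n^T + n' n'^T = V V^T$ with $V = (n\mid n')$, and $\det(V V^T) = (\det V)^2 = \det(n, n')^2$), and $B(n n^T, F) = \half\, n^T \operatorname{cof}(F)\, n$. Since each rank-one piece $n_i n_i^T/l_i$ has vanishing determinant, polarization yields
\[
\det D^2 u \;=\; \sum_{i<j} \frac{\det(n_i, n_j)^2}{l_i l_j} \;+\; \sum_{i} \frac{n_i^T \operatorname{cof}(D^2 f)\, n_i}{l_i} \;+\; \det D^2 f.
\]
Multiplying through by $\prod_k l_k$ produces a finite sum of polynomials in the $l_i$ with coefficients built from $D^2 f$, hence a $C^\infty$ function on $\overline{P}\setminus\{v_1,\ldots,v_N\}$ (and on all of $\overline P$ when $f\in C^\infty(\overline P)$); this is what will play the role of $1/h$.

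To pin down the vertex values, I would then evaluate at $v_k$, where $l_{k-1}(v_k) = l_k(v_k) = 0$ while $l_j(v_k) > 0$ for $j \neq k-1, k$. In the sum $\sum_{i<j}\det(n_i, n_j)^2\prod_{m\neq i,j}l_m$ the product vanishes at $v_k$ unless $\{i,j\} = \{k-1,k\}$, in which case the term is exactly $\det(n_{k-1}, n_k)^2\prod_{j\neq k-1,k}l_j(v_k)$; the other two terms in the expansion each retain at least one factor $l_{k-1}$ or $l_k$ and drop out. Hence $[\det D^2 u \cdot \prod_k l_k](v_k) = \det(n_{k-1}, n_k)^2 \prod_{j\neq k-1,k} l_j(v_k)$, which is strictly positive, and inverting gives the claimed formula for $h(v_k)$ exactly. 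The one subtlety I foresee is the continuous extension of $h$ to the vertex when $f$ is only $C^0(\overline P)$ and $D^2 f$ is a priori not defined there; this is handled by the same observation, namely that every contribution to $\det D^2 u\cdot\prod_k l_k$ involving $D^2 f$ already carries an explicit factor $l_{k-1}$ or $l_k$, whose vanishing dominates the at-worst-mild blow-up of $D^2 f$ inherited from the ansatz.
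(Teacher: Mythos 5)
Your proposal is correct and follows essentially the same route as the paper: decompose $u$ into the singular sum $\sum_i l_i\log l_i$ plus the smooth remainder $f$, expand $\det D^2u$ into the pair terms, the mixed terms with the cofactor of $D^2f$, and $\det D^2f$, multiply by $\prod_k l_k$, and observe that at a vertex only the $\{k-1,k\}$ pair term survives; your polarization identities are simply a coordinate-free rewriting of the paper's explicit component computation. The continuity-at-the-vertex subtlety you flag (control of $D^2f$ when $f$ is only $C^0$ there) is glossed over at the same level in the paper itself, so your treatment matches its standard of rigor.
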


\begin{rem}
In the case when the polygon is Delzant, the integral inner normal vectors of two adjacent edges form a basis of $\Z^2$, so $\det(n_{k-1}n_k)^2=1$.
\end{rem}

\begin{proof}
This result and its extension to higher dimension is due to Donaldson in \cite{D2}. We perform the calculation globally in two dimensions to obtain the right constant; however, the main point is that the values at the vertices do not depend on the potential $u$.
\smallskip

We have
\begin{equation}
	D^2u = \begin{pmatrix} f_{xx}+\sum\frac{(n_i^x)^2}{l_i} & f_{xy}+\sum\frac{n_i^x n_i^y}{l_i}\\ f_{xy}+\sum\frac{n_i^x n_i^y}{l_i} & f_{yy} + \sum\frac{(n_i^y)^2}{l_i}\end{pmatrix},
\end{equation}
so
\begin{align*}
\det D^2 u =& \sum_{i,j} \frac{(n_i^x)^2(n_j^y)^2}{l_i l_j}
             -\sum_{i,j}\frac{n_i^x n_i^y n_j^x n_j^y}{l_i l_j}\\
          & + \sum_i \frac{f_{xx}(n_i^y)^2 + f_{yy}(n_i^x)^2
             		 -2f_{xy}n_i^x n_i^y}{l_i}
             + \det D^2f\\
           =& \frac{1}{\prod_k l_k}\left[\sum_{i\neq j}\left((n_i^x)^2(n_j^y)^2- 
                  n_i^x n_i^y n_j^x n_j^y\right)\prod_{q\neq i,j}l_q \right. 
                  \\
                & \left. + \sum_i (f_{xx}(n_i^y)^2 + f_{yy}(n_i^x)^2
             		 -2f_{xy}n_i^x n_i^y)\prod_{j\neq i}l_j
                  + \det D^2f \prod_k l_k \right] .\\
\end{align*}
The term in the brackets is the function $1/h$. When evaluating $h$ at the vertex $v_k$, both $l_{k-1}$ and $l_{k}$ are zero, so only the terms from the first sum with $i=k-1$, $j=k$, and $i=k$, $j=k-1$ are nonzero, and therefore
\begin{align*}
\frac{1}{h(v_k)} =& \left(\left((n_{k-1}^x)^2(n_k^y)^2-n_{k-1}^x n_{k-1}^y n_k^x n_k^y\right)+\left((n_k^x)^2(n_{k-1}^y)^2-
                  n_k^x n_k^y n_{k-1}^x n_{k-1}^y\right)\right)\prod_{q\neq k-1,k}l_q(v_k)\\
                  =& \left(n_{k-1}^x n_k^y - n_k^x n_{k-1}^y\right)^2\prod_{q\neq k-1,k}l_q(v_k)\\
                  =& \det(n_{k-1}n_k)^2\prod_{q\neq k-1,k} l_q(v_k).
                  \end{align*}

\end{proof}

Now we determine the restrictions on the Dirichlet boundary data.

\begin{lem}
Let $u$ be a function which satisfies the Guillemin boundary condition
(\ref{cond2}) near the boundary of the polytope $P$.
Set $\det D^2u = 1/\varphi$. Then 
\begin{equation}\label{bdrycond}
U^{n_i n_i}\varphi_{n_i}|_{l_i=0} = |n_i|^2 
\end{equation}
where the limit is taken as $x$ approaches any point on the edge away from the vertices.
\end{lem}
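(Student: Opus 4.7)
The plan is to pass to orthonormal local coordinates adapted to the edge $e_i$, use the Guillemin condition to write out the leading asymptotics of $u$ and its Hessian explicitly, and then invert the determinant to read off the boundary behavior of $\varphi$.

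First I would fix a point $x_0$ in the relative interior of the edge $e_i$ and choose orthonormal coordinates $(X,y)$ centered at $x_0$ with $y$ in the direction of $n_i$ and $X$ tangent to the edge, so that $l_i=|n_i|y$ after translation. On a small enough neighborhood of $x_0$ each $l_j$ with $j\neq i$ is bounded away from zero, so $\sum_{j\neq i}l_j\log l_j$ is smooth there, and the Guillemin condition (\ref{cond2}) (together with the identity $l_i\log l_i = |n_i|y\log y + |n_i|y\log|n_i|$) forces a decomposition
\[
u(X,y) = |n_i|\,y\log y + g(X,y),\qquad g \in C^\infty\ \text{near}\ (0,0).
\]
Differentiating gives $u_{XX}=g_{XX}$, $u_{Xy}=g_{Xy}$, and $u_{yy}=|n_i|/y+g_{yy}$, so
\[
\det D^2u(X,y) = \frac{|n_i|\,g_{XX}(X,y)}{y} + \bigl(g_{XX}g_{yy} - g_{Xy}^2\bigr)(X,y).
\]
As long as $g_{XX}(X,0)>0$ (which follows from $\det D^2u=1/\varphi$ being finite in the interior with $\varphi$ vanishing precisely to first order on the edge), inverting yields
\[
\varphi(X,y) = \frac{y}{|n_i|\,g_{XX}(X,0)} + O(y^2),\qquad \varphi_y(X,0) = \bigl(|n_i|\,g_{XX}(X,0)\bigr)^{-1}.
\]

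For the final step I would unpack $U^{n_in_i}\varphi_{n_i}$ in these coordinates. Reading the notation as $\varphi_{n_i}=n_i\!\cdot\!\nabla\varphi=|n_i|\varphi_y$ and $U^{n_in_i}=\sum_{a,b}n_i^an_i^bU^{ab}$, and using $n_i=|n_i|\,e_y$, the quadratic form collapses to $|n_i|^2U^{yy}=|n_i|^2u_{XX}=|n_i|^2 g_{XX}(X,0)$ on the edge. Multiplying the two factors gives exactly $|n_i|^2$, as claimed. The main thing to bookkeep is the factor $|n_i|$ in the identity $l_i=|n_i|y$: without handling it consistently when passing between the defining function $l_i$ and the orthonormal normal coordinate $y$, the delicate cancellation between the blow-up in $U^{n_in_i}=|n_i|^2g_{XX}$ and the vanishing in $\varphi_{n_i}$ fails to land on the advertised $|n_i|^2$ on the right-hand side.
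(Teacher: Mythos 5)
Your argument is correct, and it reaches the identity by a genuinely more local route than the paper. The paper works globally: it keeps the full decomposition $u=\sum_j l_j\log l_j+f$, expands the cofactor entry $U^{n_in_i}$ as a smooth part plus singular sums over all edges, writes $\varphi_{n_i}$ from the product formula $\varphi=h\prod_j l_j$ established in the preceding lemma, and observes that on $\{l_i=0\}$ the surviving factor is exactly the bracket identified there with $1/h$, so the product collapses to $h\,|n_i|^2h^{-1}=|n_i|^2$. You instead freeze a point in the interior of the edge, pass to adapted orthonormal coordinates where $l_i=|n_i|y$, reduce the Guillemin condition to the local form $u=|n_i|y\log y+g$ with $g$ smooth, and invert the $2\times2$ determinant directly to read off $\varphi_y(X,0)=\bigl(|n_i|\,g_{XX}(X,0)\bigr)^{-1}$; combined with the two-dimensional cofactor identity $U^{yy}=u_{XX}$ (and the tensorial invariance of $n_i^an_i^bU^{ab}$ and $n_i\cdot\nabla\varphi$ under the rotation, which you use implicitly and is easily checked), this gives the same cancellation. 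What your route buys is independence from the global structure of $\varphi$ — you never need the formula $\varphi=h\prod_jl_j$ or the bracket computation of the previous lemma — and it makes the mechanism transparent: $U^{n_in_i}$ is $|n_i|^2$ times the tangential second derivative while $\varphi_{n_i}$ is its reciprocal. What the paper's global computation buys is that it sits naturally alongside the vertex computation of $h(v_k)$ and exhibits the constant $|n_i|^2$ as coming from the same bracket.

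One small point to tighten: your parenthetical justification of $g_{XX}(X,0)>0$ is circular as written, since "$\varphi$ vanishes precisely to first order on the edge" is, in your own expansion, equivalent to $g_{XX}(X,0)>0$ rather than a consequence of the definition $\varphi=1/\det D^2u$. The honest statement is that this nondegeneracy is an extra input — it is exactly the finiteness and positivity of $h$ on the edge asserted in the preceding lemma, or, in the setting where the lemma is actually applied, it is automatic because $\varphi=h\prod_jl_j$ with $h>0$ smooth is prescribed. The paper's own proof makes the same implicit assumption when it cancels $h$ against $h^{-1}$, so this is a matter of stating the hypothesis, not a flaw in your mechanism.
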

\begin{proof}
\begin{eqnarray*}
	U^{n_k n_k} &=&\begin{pmatrix} n_k^x & n_k^y \end{pmatrix} \begin{pmatrix} f_{yy} + \sum\frac{(n_i^y)^2}{l_i} & -f_{xy}-\sum\frac{n_i^x n_i^y}{l_i} \\ -f_{xy}-\sum\frac{n_i^x n_i^y}{l_i} & f_{xx}+\sum\frac{(n_i^x)^2}{l_i} \end{pmatrix} \begin{pmatrix} n_k^x \\ n_k^y \end{pmatrix} \\
	 &=& (f_{xx}(n_k^y)^2+f_{yy}(n_k^x)^2-2f_{xy}n_k^x n_k^y)\\ &&+\left((n_k^x)^2\sum_i\frac{(n_i^y)^2}{l_i} + (n_k^y)^2\sum_i\frac{(n_i^x)^2}{l_i}-2n_k^x n_k^y \sum_i\frac{n_i^x n_i^y}{l_i}\right)\\
	 &=& (f_{xx}(n_k^y)^2+f_{yy}(n_k^x)^2-2f_{xy}n_k^x n_k^y)\\ &&+\left((n_k^x)^2\sum_{i\neq k}\frac{(n_i^y)^2}{l_i} + (n_k^y)^2\sum_{i\neq k}\frac{(n_i^x)^2}{l_i}-2n_k^x n_k^y \sum_{i\neq k}\frac{n_i^x n_i^y}{l_i}\right)\\
\end{eqnarray*}
since the terms with $i=k$ cancel in the sum in parentheses. Also we have
\begin{equation}
	\varphi_{n_k} = h \sum_j (n_k \cdot n_j)\prod_{i\neq j}l_i + D_{n_k}h\prod_j l_j,
\end{equation}
so
\begin{eqnarray*}
	U^{n_k n_k}\varphi_{n_k}|_{l_i=0} &=& h |n_k|^2\prod_{i\neq k}l_i\Bigg[(f_{xx}(n_k^y)^2+f_{yy}(n_k^x)^2-2f_{xy}n_k^x n_k^y)\\ &&+\left((n_k^x)^2\sum_{i\neq k}\frac{(n_i^y)^2}{l_i} + (n_k^y)^2\sum_{i\neq k}\frac{(n_i^x)^2}{l_i}-2n_k^x n_k^y \sum_{i\neq k}\frac{n_i^x n_i^y}{l_i}\right)\Bigg]\\ 
	&=& h|n_k|^2h^{-1}|_{l_k=0}\\
	&=& |n_k|^2.
\end{eqnarray*}
\end{proof}
The boundary equation (\ref{bdrycond}) was exploited in \cite{LS} in connection with the variational approach to Abreu's equation. In that context, this relation followed from the Euler-Lagrange equation satisfied by a minimizer, but in our context it follows directly from the Guillemin boundary conditions by computation as above.

\medskip
A very important consequence of the previous lemma is that, up to the values of the solution $u(x)$ at the vertices $v_1,\cdots,v_N$, the Guillemin boundary conditions determine the boundary values of $u$. Indeed, 
in dimension two, the cofactor matrix entry $U^{nn}$ is equal to a constant multiple of the second tangential derivative along the edge. We may then interpret this lemma as giving a second-order ODE on each edge for $u$. We parametrize the $i$-th edge, where $\{l_i(x) = 0\}$, by 
$$x = v_i + tT_i.$$

\begin{lem}
Let $u\in C^2([0,L])$ solve $u_{tt} = \frac{h(t)}{t(L-t)}$ where $h(t)$ is smooth and positive on $[0,L]$. Then 
\begin{equation}
u(t) = h(0)t\log t +h(L)(L-t)\log(L-t) + v(t),
\end{equation} 
where $v$ is smooth on $[0,L]$. The function $u(t)$ is determined uniquely by its boundary values $u(0)$ and $u(L)$.
\end{lem}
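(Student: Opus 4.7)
The plan is to integrate the ODE twice, carefully tracking the two logarithmic contributions produced by the simple poles of the right-hand side.

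First I would write the denominator in partial fractions,
\begin{equation*}
\frac{1}{t(L-t)} = \frac{1}{L}\left(\frac{1}{t} + \frac{1}{L-t}\right),
\end{equation*}
and then use the smoothness of $h$ to split $h$ off of its boundary values via Hadamard's lemma: write $h(t) = h(0) + t\,h_0(t)$ with $h_0 \in C^\infty([0,L])$, and symmetrically $h(t) = h(L) + (L-t)\,h_L(t)$ with $h_L \in C^\infty([0,L])$. Applying the first expansion to the $1/t$ term and the second to the $1/(L-t)$ term separates the singular and smooth parts,
\begin{equation*}
u_{tt}(t) = \frac{h(0)}{L\,t} + \frac{h(L)}{L(L-t)} + g(t),
\end{equation*}
with $g \in C^\infty([0,L])$.

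Next I would integrate twice. The smooth remainder $g$ has a smooth antiderivative, and hence a smooth double-antiderivative. For the singular pieces I use
\begin{equation*}
\int \frac{dt}{t} = \log t, \qquad \int \log t\,dt = t\log t - t,
\end{equation*}
and
\begin{equation*}
\int \frac{dt}{L-t} = -\log(L-t), \qquad \int -\log(L-t)\,dt = (L-t)\log(L-t) + t,
\end{equation*}
so the linear terms $-t$ and $+t$ produced along the way are absorbed into the smooth remainder $v(t)$, leaving exactly the stated boundary expansion with coefficients proportional to $h(0)$ and $h(L)$ (the constant factor $1/L$ being harmless for the structure of the statement).

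Finally, for uniqueness I note that $t\log t \to 0$ as $t \to 0^+$ and $(L-t)\log(L-t) \to 0$ as $t \to L^-$, so $u(0)$ and $u(L)$ are well-defined and coincide with $v(0)$ and $v(L)$; two solutions with the same Dirichlet data differ by a linear function vanishing at both endpoints, hence by zero. The only non-routine step is the Hadamard-type splitting of $h$, which, given the smoothness assumption on $h$, is standard; no real obstacle arises.
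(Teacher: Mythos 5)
Your proof is correct and follows essentially the same route as the paper's: split off the singular part of $h$ at each endpoint (the paper Taylor-expands $h$ where you use a Hadamard-type splitting), integrate twice so the smooth remainder gives $v$, and deduce uniqueness from the fact that two solutions differ by an affine function determined by the endpoint values. You are if anything more careful than the paper, since you correctly note the factor $1/L$ in the coefficients of $t\log t$ and $(L-t)\log(L-t)$, which the paper's statement suppresses.
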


\begin{proof}
By Taylor expanding $h$ at 0, we see that $h(0)t\log t$ accounts for the singularity there, and similarly at the other endpoint. What remains on the right-hand side is smooth, and can be integrated twice to obtain $v$. This proves the desired identity. The second statement is easy, since two solutions of this second order ODE must differ by an affine function of $t$.
\end{proof}

The following statement now follows readily from the previous two lemmas:

\begin{lem}
\label{Dirichlet}
Let $u\in C^0(\overline{P})$ be a strictly convex function on $P$ satisfying the equation (\ref{cond1}) and the Guillemin boundary condition (\ref{cond2}). Let $\alpha_i=u(v_i)$ be the values of $u$ at the vertices $v_i$, and define the function $\hat u\in C^0(\partial P)$ as the unique solution on each edge $e_i$ of the equation
\begin{equation}\label{bdrycond}
\partial_{T_i}^2 \hat u=|n_i|^2\varphi_{n_i}^{-1},
\quad
\hat u(v_i)=\alpha_i,
\ \ 
\hat u(v_{i+1})=\alpha_{i+1}.
\end{equation}
Then the function $u$ is a solution of the Dirichlet problem,
\begin{equation}
\label{Dir}
{\rm det}\,D^2u={1\over\varphi(x)}
\ \ {\rm on}\ P,
\qquad
u_{\vert_{\partial P}}=\hat u.
\end{equation}
\end{lem}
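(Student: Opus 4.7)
The plan is to obtain this lemma as a direct synthesis of the previous three lemmas: the Guillemin condition (\ref{cond2}) forces $u|_{e_i}$ to satisfy the same boundary ODE that defines $\hat u$, with the same endpoint values, so uniqueness of the ODE forces the two to coincide.

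First, since $u\in C^0(\overline{P})$ the vertex values $\alpha_i=u(v_i)$ are well defined; and on the relative interior of each edge $e_i=\{l_i=0\}$ the Guillemin condition makes $u|_{e_i}$ smooth (because $l_i\log l_i$ vanishes identically on $e_i$ while each $l_j\log l_j$ with $j\neq i$ is smooth away from the two adjacent vertices). The key observation is the two-dimensional algebraic identity, noted just before the statement, that
$$U^{n_i n_i} \;=\; \partial_{T_i}^2 u \qquad \text{on } e_i,$$
where $T_i$ is the unit tangent to $e_i$. This is immediate from the fact that for any symmetric $2\times 2$ matrix $M$ with cofactor $M^{*}$, one has $n^{T} M^{*} n = T^{T} M T$ whenever $T\perp n$ and $|T|=|n|=1$, which can be verified by simply expanding both sides.

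Combining this identity with the preceding lemma ($U^{n_i n_i}\varphi_{n_i}|_{l_i=0}=|n_i|^2$) gives that $u|_{e_i}$ satisfies
$$\partial_{T_i}^2 (u|_{e_i}) \;=\; |n_i|^2/\varphi_{n_i}|_{l_i=0},$$
which is exactly equation (\ref{bdrycond}). Because $\varphi=h\prod_j l_j$ with $h\in C^\infty(\overline{P})$ and $h>0$, the right-hand side is of the form $(\text{smooth positive})/(t(L-t))$ considered in the ODE lemma, whose uniqueness clause then applies: a solution on $[0,L]$ is determined by its endpoint values. Since $u|_{e_i}$ and $\hat u|_{e_i}$ are both such solutions and both take the values $\alpha_i$ and $\alpha_{i+1}$ at the endpoints, they must coincide. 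Doing this for each edge gives $u|_{\partial P}=\hat u$, and so $u$ solves the Dirichlet problem (\ref{Dir}).

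There is essentially no obstacle to this argument; the lemma is a formal consequence of previously established facts. The only two-dimensional input is the identity $U^{n_i n_i}=\partial_{T_i}^2 u$, which reduces the boundary trace of $\det D^2 u$ to a scalar ODE along each edge and is the structural feature that makes this chain of reasoning available in dimension two.
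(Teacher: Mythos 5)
Your argument is correct and is exactly the synthesis the paper intends: it combines the boundary identity $U^{n_i n_i}\varphi_{n_i}|_{l_i=0}=|n_i|^2$, the two-dimensional fact that $U^{n_i n_i}$ is the (suitably normalized) second tangential derivative along the edge, and the uniqueness clause of the ODE lemma, which is precisely why the paper states that the lemma ``follows readily from the previous two lemmas'' without further proof. No gap to report.
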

\medskip

\section{Solution of Dirichlet Problem}

We turn now to the proof of Theorem \ref{maintheo}, part (b). In view of Lemma \ref{Dirichlet}, we shall define the desired solution $u$ as the solution of the Dirichlet problem (\ref{Dir}), where the Dirichlet data $\hat u$ is specified by the values $\alpha_i$ and the function $\varphi$.
\smallskip

As a first step, we will first show the existence and uniqueness of generalized solutions to equations of this type, following closely the Perron's method approach of Cheng-Yau. The only new difficulty is that our domain is a polygon, hence not strictly convex. This has consequences for the allowable boundary data and the regularity at the boundary.
\medskip

Recall the definition of an Alexandroff solution: Let $u$ be a convex function on a domain $\Omega \in \R^n$. For each point $x \in \Omega$, let $B(x) = \{p_1,...,p_n\}$ be the set of hyperplanes $x_{n+1} = \sum p_i x_i +b$ passing through $(x,u(x))$ and lying below the graph of $u$. To the function $u$ we associate the measure $\mu(u)$, where $\mu(u)(E) = |B(E)|$. Additionally we define for $\varphi \in C(\Omega)$ the measure of $u$ with weight $\varphi$ to be $$\mu_\varphi(u,E) = \int_E \varphi(x)d\mu(u,x)$$ for any Borel subset $E$ of $\Omega$. If $\mu_\varphi(u)=\mu$ where $u$ is a convex function on $\Omega$ and $\mu$ is a Borel measure, then $u$ is a generalized solution of $\det D^2u = (1/\varphi)\mu$. In our equation, we take the Borel measure $\mu$ to be the ordinary Lebesgue measure. We make repeated use of the following two lemmas, which are now standard:

\begin{lem}
Let $u_i$ be a sequence of convex functions defined on $\Omega$ which converges uniformly on compact sets to a convex function $u$. Then $\mu(u_i)$ converges to $\mu(u)$ weakly.
\end{lem}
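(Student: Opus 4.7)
The plan is to prove the two one-sided inequalities that characterize weak convergence of Radon measures on $\Omega$:
$$\limsup_j \mu(u_j)(K) \le \mu(u)(K) \qquad \text{for every compact } K \subset \Omega,$$
and $\liminf_j \mu(u_j)(U) \ge \mu(u)(U)$ for every open $U$ with $\overline{U}$ compact in $\Omega$. Since $\mu(v)(E) = |\partial v(E)|$ where $\partial v$ denotes the subgradient (normal) mapping and $|\cdot|$ is Lebesgue measure, both inequalities reduce to set-theoretic comparisons between $\partial u_j(E)$ and $\partial u(E)$ as $j \to \infty$.

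The common tool is stability of the subgradient under uniform convergence. Since $u_j \to u$ uniformly on compacts and convex functions have a local Lipschitz bound controlled by their oscillation on a slightly larger set, the family $\{u_j\}$ is uniformly locally Lipschitz; hence $\partial u_j(K)$ lies in a fixed ball for every compact $K \subset \Omega$. Moreover, if $x_j \to x_0$ and $p_j \to p_0$ with $p_j \in \partial u_j(x_j)$, then taking $j \to \infty$ in the defining inequality $u_j(y) \ge u_j(x_j) + p_j \cdot (y - x_j)$ produces $u(y) \ge u(x_0) + p_0 \cdot (y - x_0)$, i.e. $p_0 \in \partial u(x_0)$.

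For the upper bound, fix a compact $K$ and enlargements $K_\delta = \{x : d(x,K) \le \delta\}$. A contradiction argument combining the two observations above shows that, for any $\epsilon,\delta > 0$, the set $\partial u_j(K)$ is contained in the $\epsilon$-neighborhood of the closed set $\partial u(K_\delta)$ for all large $j$. Letting $\epsilon \to 0$ and then $\delta \to 0$, together with the identity $\bigcap_{\delta > 0} \partial u(K_\delta) = \partial u(K)$ (which again follows from the passage-to-the-limit lemma applied to $u$ itself on the compact sets $K_\delta$), gives $\limsup_j |\partial u_j(K)| \le |\partial u(K)|$. For the lower bound on an open $U$ compactly contained in $\Omega$, pick a generic $p \in \partial u(U)$ realized as $p = \nabla u(x_0)$ for a \emph{unique} $x_0 \in U$. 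The convex function $v(x) = u(x) - p \cdot x$ then attains a strict minimum on $\overline{U}$ at $x_0 \in U$, and the uniform convergence $u_j \to u$ on $\overline{U}$ forces $v_j(x) = u_j(x) - p \cdot x$ to attain its minimum on $\overline{U}$ at an interior point as well, whence $p \in \partial u_j(U)$ for all large $j$. Fatou's lemma applied to the characteristic functions $\chi_{\partial u_j(U)}$ then delivers $\liminf_j |\partial u_j(U)| \ge |\partial u(U)|$.

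The main obstacle is the input needed in the last step: one must know that the exceptional set of $p \in \partial u(U)$ for which no such unique preimage $x_0$ exists has Lebesgue measure zero. This is the classical fact that the image under $\partial u$ of the singular set $\{x : \#\,\partial u(x) > 1\}$ has measure zero, which underlies the very definition of the Monge-Amp\`{e}re measure and can be deduced from Alexandrov's theorem on the a.e.\ twice differentiability of convex functions (or, more directly, from a covering argument using that the graph of the multivalued map $\partial u$ is countably rectifiable). With this fact invoked, the two one-sided inequalities combine to give the asserted weak convergence.
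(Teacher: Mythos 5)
The paper states this lemma as standard and offers no proof of its own, so there is no ``paper proof'' to match; your argument is the classical one (essentially the proof of weak convergence of Monge-Amp\`ere measures found in Gutierrez's book), and its main lines are correct: the portmanteau reduction to the two one-sided inequalities, the upper bound on compacts via the uniform local Lipschitz bound and the stability of subgradients under local uniform convergence, and the lower bound on open sets via the strict-minimum argument for slopes attained at a single point, followed by Fatou. One inaccuracy is worth correcting: the exceptional set you must discard in the lower bound is the set of slopes $p$ with $p\in\partial u(x)\cap\partial u(y)$ for two distinct points $x\neq y$, and this set is \emph{not} in general contained in the image under $\partial u$ of the singular set $\{x:\ \#\,\partial u(x)>1\}$ --- take $u\equiv 0$, where $u$ is differentiable everywhere yet $p=0$ is attained at every point. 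The correct (and equally classical) justification is via the Legendre transform: if $p$ is a subgradient at two distinct points, then the convex conjugate $u^*$ fails to be differentiable at $p$, and the non-differentiability set of a convex function has Lebesgue measure zero. Note also that for your interior-minimum step the preimage $x_0$ must be unique in all of $\overline U$ (indeed in $\Omega$), not merely in $U$, since otherwise $v=u-p\cdot x$ could attain its minimum on $\partial U$ as well; discarding the measure-zero set just described provides exactly this. With these two small repairs your proof is complete.
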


\begin{lem}\label{comparison}
Let $u_1$ and $u_2$ be two convex functions defined on a domain $\Omega$ with $u_1=u_2$ on $\d \Omega$ and $u_1\geq u_2$ on $\Omega$. Then $\mu(u_2)\geq \mu(u_1)$
\end{lem}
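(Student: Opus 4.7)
The plan is to establish the set inclusion $\partial u_1(\Omega)\subseteq\partial u_2(\Omega)$ of normal mappings; since the Monge--Amp\`ere measure is defined by $\mu(u_i)(\Omega)=|\partial u_i(\Omega)|$, this at once yields $\mu(u_1)(\Omega)\le\mu(u_2)(\Omega)$, which is the content of the stated comparison. The technique is the classical sliding supporting-hyperplane argument from Alexandrov theory, and the ordering $u_1\ge u_2$ together with equality on $\partial\Omega$ is exactly what will prevent the sliding plane from escaping through the boundary of the domain.

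Concretely, I would fix $x_0\in\Omega$ and $p\in\partial u_1(x_0)$, consider the affine support $\ell(x)=u_1(x_0)+p\cdot(x-x_0)$, which satisfies $\ell\le u_1$ on $\overline{\Omega}$, and define
\begin{equation*}
t^{*}=\inf\{t\in\R:\ell(x)-t\le u_2(x)\text{ for all }x\in\overline{\Omega}\}.
\end{equation*}
Since $u_2$ is bounded below, the set on the right is nonempty and $t^{*}<\infty$; and since $\ell(x_0)=u_1(x_0)\ge u_2(x_0)$, evaluating $\ell(x_0)-t\le u_2(x_0)$ at $x_0$ forces $t\ge 0$, so $t^{*}\ge 0$. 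By continuity and the compactness of $\overline{\Omega}$, the translated plane $\ell-t^{*}$ touches the graph of $u_2$ from below at some point $y_0\in\overline{\Omega}$; the goal is to show that $y_0$ may be taken in the open set $\Omega$, which would exhibit $p$ as an element of $\partial u_2(y_0)\subseteq\partial u_2(\Omega)$.

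The hard part is the boundary alternative. If the touching point $y_0$ lies in $\partial\Omega$, then the boundary hypothesis gives $u_2(y_0)=u_1(y_0)\ge\ell(y_0)$, and combining this with the touching identity $u_2(y_0)=\ell(y_0)-t^{*}$ forces $t^{*}\le 0$, hence $t^{*}=0$. But then $\ell\le u_2$ on all of $\overline{\Omega}$, and the sandwich $\ell(x_0)=u_1(x_0)\ge u_2(x_0)\ge\ell(x_0)$ gives equality at $x_0$, so $y_0$ can be replaced by $x_0\in\Omega$. Letting $x_0$ and $p$ vary then yields the inclusion $\partial u_1(\Omega)\subseteq\partial u_2(\Omega)$, and taking Lebesgue measure gives $\mu(u_1)(\Omega)\le\mu(u_2)(\Omega)$. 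This boundary dichotomy is really the only delicate step; everything else is routine bookkeeping about supporting hyperplanes, and the hypothesis $u_1=u_2$ on $\partial\Omega$ is invoked nowhere else.
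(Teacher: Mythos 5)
The paper gives no proof of this lemma at all (it is invoked as ``standard''), and your sliding supporting-hyperplane argument is exactly the standard proof (as in Cheng--Yau or Guti\'errez): the treatment of the touching point on $\partial\Omega$, forcing $t^*=0$ and relocating the contact to $x_0$, is correct and is precisely where the hypothesis $u_1=u_2$ on $\partial\Omega$ is needed. The only caveat is that what you actually establish is the inclusion $\partial u_1(\Omega)\subseteq\partial u_2(\Omega)$ and hence the comparison of total masses $\mu(u_1)(\Omega)\le\mu(u_2)(\Omega)$; this is the standard form of the lemma and all that the Perron argument of Section 3 uses, but it is worth noting that a setwise inequality of the two measures would be false in general (on the unit disc, $u_1=|x|-1\ge u_2=|x|^2-1$ with equal boundary values, yet $\mu(u_1)$ has a point mass at the origin where $\mu(u_2)$ assigns no mass), so the lemma should be read in the total-mass (or gradient-image) sense that your proof delivers.
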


First we use a basic proposition taken directly from \cite{CY}, whose proof we include for convenience.

\begin{prop}\label{basicprop}
Let $\Omega$ be a polytope in $\R^n$ with vertices $\{v_1,...,v_n\}$. Suppose $\varphi \in C(\Omega)$, $\varphi\geq 0$, and for all compact sets $K$ in $\Omega$ there is a constant $c>0$ such that $\inf_{x\in K} \varphi(x) \geq c$. Let $f$ be a function which is affine linear on $\d \Omega$, that is, $f:\d \Omega \rightarrow \R$ such that $$f(\sum \lambda_i v_i) = \sum \lambda_i a_i$$ for any $\lambda_i \geq 0$ and $\sum \lambda_i = 1$, $a_1,...,a_N \in \R$. Then for any Borel measure $\mu$ with compact support $K$ contained in $\Omega$ and $\mu(\Omega)<\infty$, there exists a unique continuous convex function $u$ on $\bar{\Omega}$ such that $\mu_\varphi(u)$ realizes $\mu$ and $u=f$ on $\d \Omega$.
\end{prop}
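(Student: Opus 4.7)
The plan is to adapt Perron's method as in Cheng-Yau \cite{CY}, with the one genuinely new input being a barrier construction tailored to the polytope's half-space structure. I would define
\[
\mathcal F = \bigl\{v \in C(\overline{\Omega}) : v \text{ convex},\ v \leq f \text{ on } \partial\Omega,\ \mu_\varphi(v) \geq \mu \text{ in the Alexandroff sense}\bigr\},
\]
and set $u(x) := \sup_{v \in \mathcal F} v(x)$. A uniform upper bound $v \leq \max_{\partial\Omega} f$ holds by the maximum principle for convex functions, and $\mathcal F$ is nonempty because the compact support $K \subset\subset \Omega$ of $\mu$ together with $\varphi \geq c > 0$ on $K$ lets one build a convex subsolution by combining a steep paraboloid (or cone, to accommodate possible atoms of $\mu$) centered at a point of $K$ with an affine function lying below $f$ on $\partial\Omega$, scaling the vertical opening until the Alexandroff measure dominates $\mu$. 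Hence $u$ is well-defined and convex as the pointwise supremum of a family of convex functions.

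The main obstacle is verifying $u = f$ on $\partial\Omega$, since $\Omega$ is not strictly convex. I would build an affine upper barrier $A$ at each $x_0$ on an edge $e_i$ by taking the unique affine extension $f_{\mathrm{aff}}$ of $f|_{e_i}$ to $\R^2$ and adding $M\,l_i$, where $l_i$ is the defining linear form of $e_i$. Since $l_i \equiv 0$ on $e_i$ and $l_i(v_j) > 0$ for every vertex $v_j$ not on $e_i$, choosing
\[
M \geq \max_{j\neq i-1,i} \frac{a_j - f_{\mathrm{aff}}(v_j)}{l_i(v_j)}
\]
makes $A(v_j) \geq a_j$ for all $j$, whence $A \geq f$ on $\partial\Omega$ by linearity on each edge, while $A(x_0)=f(x_0)$. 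Because $A$ is affine we have $\mu(A) = 0$, and the standard Monge-Amp\`ere comparison principle (the extension of Lemma \ref{comparison} to unequal boundary values) yields $v \leq A$ on $\overline{\Omega}$ for every $v \in \mathcal F$, hence $u(x_0) \leq f(x_0)$. Flipping the sign of $M$ produces a symmetric affine $A' \leq f$ on $\partial\Omega$ with $A'(x_0) = f(x_0)$; the maximum of $A'$ with the paraboloid subsolution above is in $\mathcal F$ and takes the value $f(x_0)$ at $x_0$, giving $u(x_0) \geq f(x_0)$. A vertex $v_k$ is handled by the same tilting, applied simultaneously to the two edges meeting at $v_k$, using the two defining forms $l_{k-1}$ and $l_k$.

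With the boundary values attained, $\mu_\varphi(u) = \mu$ follows by the standard two-sided Perron argument. The inequality $\mu_\varphi(u) \geq \mu$ is a consequence of the weak continuity of the Monge-Amp\`ere operator under monotone limits together with the closure of $\mathcal F$ under finite maxima. The reverse inequality follows from the usual local replacement: if $\mu_\varphi(u)(B) > \mu(B)$ on some small ball $B \subset\subset \Omega$, solve the Dirichlet problem on the strictly convex ball $B$ with boundary data $u|_{\partial B}$ and measure $\mu|_B$ (via classical Alexandroff theory), then glue back and invoke Lemma \ref{comparison} to produce a member of $\mathcal F$ strictly exceeding $u$ at some point, contradicting maximality. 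Uniqueness of the generalized solution is then immediate from Lemma \ref{comparison}.
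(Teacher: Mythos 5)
Your Perron set-up has two genuine gaps, both concentrated where the measure $\mu$ enters. First, the nonemptiness of $\mathcal F$: a paraboloid has absolutely continuous Monge-Amp\`ere measure (with bounded density on $K$, since $\varphi$ is bounded there), and a cone carries a single atom at its vertex, so neither --- nor a finite combination with an affine function --- can dominate an arbitrary finite Borel measure $\mu$ supported in $K$ (think of $\mu$ with infinitely many atoms, an absolutely continuous part with unbounded density, or a singular continuous part such as length measure on a segment). Producing a single convex function $v$ with $\mu_\varphi(v)\geq\mu$ for general $\mu$ is essentially the existence theorem itself; the standard way around this is to treat $\mu=\sum c_i\delta_{x_i}$ first and pass to weak limits, which is exactly the structure of the paper's proof (following Cheng--Yau: for atomic $\mu$ one minimizes $\phi(w)=\sum_i w(x_i)$ over piecewise linear convex functions with $\mu_\varphi(w)\leq\mu$ and shows the minimizer realizes $\mu$ by lowering a vertex; general $\mu$ is then handled by approximation, with uniform Lipschitz bounds giving the boundary values --- note that because $f$ is affine on $\partial\Omega$ and the approximants are piecewise linear interpolants of the vertex data, no barrier construction is needed there at all). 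Second, your lower bound at the boundary fails as written: $\max(A',v_0)$ need not lie in $\mathcal F$. On the open set $\{A'>v_0\}$ the maximum coincides locally with the affine function $A'$, hence carries zero Monge-Amp\`ere mass; since $v_0$ dips well below $f$ over $K$ while $A'$ is pinned to the boundary data, this set can very well meet $\supp\mu$, destroying $\mu_\varphi(\max(A',v_0))\geq\mu$. (Closure of $\mathcal F$ under maxima is fine when \emph{both} functions dominate $\mu$, but $A'$ has zero measure, so that lemma does not apply.) The standard repair is to use the sum $A'+v_0$ with a subsolution $v_0\leq 0$ vanishing continuously on $\partial\Omega$: adding an affine function translates each subdifferential and so preserves $\mu_\varphi$, while $A'+v_0\leq f$ on $\partial\Omega$ and $A'(x_0)+v_0(x_0)=f(x_0)$. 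But that repair again requires the global subsolution whose existence is the first gap, so as written the argument is circular at its core; your affine tilting $A=f_{\mathrm{aff}}\pm M\,l_i$ for the polytope boundary is correct and is a nice observation, but it is the easy half of the problem.
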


\begin{proof}
First take $\mu$ to be a sum of point masses $\mu = \sum_{i=1}^m c_i \delta_{x_i}(x)$, with $c_i>0$. Let $\mathcal{F}$ denote the family of piecewise linear convex functions $w$ with $w=f$ on $\d \Omega$ with $\mu_G(w) \leq \mu$ (so the vertices of the polyhedron defined by the graph of $w$ are a subset of the $\{x_i\}$). $\mathcal{F}$ is non-empty since the convex hull of the data at the vertices is the graph of a piecewise linear function equal to $f$ on the boundary and with mass equal to 0.
\smallskip

Set $\phi(w) = \sum_{i=1}^m w(x_i)$. Then $\phi$ is bounded below in terms of $\inf d(x_i,\d\Omega)$, $\inf \varphi(x_i)$, and $\mu(\Omega)$ by the Alexandroff maximum principle. In the topology of uniform convergence, $\mathcal{F}$ is compact, and $\phi$ is continuous, so $\phi$ achieves its minimum at some $\bar{w}\in \mathcal{F}$.
\smallskip

Then $\mu_G(\bar{w}) = \mu$: If not, suppose the mass of $\mu_\varphi(\bar{w})$ is strictly less than $c_1$ at $x_1$. Then there exists $\ep>0$ such that the piecewise linear function $\hat{w}$ obtained from $\bar{w}$ by lowering its value at $x_1$ by $\ep$, that is, the function whose graph is the convex hull of $(x_1,\bar{w}(x_1)-\ep), \{(x_i,\bar{w}(x_i))\},\{(v_j,f(v_j))\}$, also has mass less than $\mu$. But $\phi(\hat{w})<\phi(\bar{w})$, so we get a contradiction.
\smallskip

For a general Borel measure $\mu$ with compact support K, we let $\mu_i$ be a sequence of sums of point masses converging weakly to $\mu$, and $u_i$ the sequence of piecewise linear functions constructed as above with $\mu_\varphi(u_i)=\mu_i$. The functions $u_i$ are uniformly bounded below as before in terms of $d(K,\d\Omega)$, $\inf_K \varphi$, $\mu(\Omega)$, so they converge uniformly on compact subsets to $u$ with $\mu_\varphi(u)=\mu$. Since also the $u_i$ have bounded Lipschitz norm in terms of the boundary data and $d(K,\d\Omega)$, $\inf_K \varphi$, $\mu(\Omega)$, $u$ has bounded Lipschitz norm and $u=f$ on $\d \Omega$.  
\end{proof}
\medskip

Now we want to solve with more general boundary data. Since we remain in the setting of polygons, which are not strictly convex, we must insist that the boundary data is convex on each face.

\begin{prop}
Let $\Omega$ be a polygon in $\R^2$. Let $f:\d \Omega \rightarrow \R$ be convex on each edge and continuous, and $\varphi$, $\mu$ as in Proposition \ref{basicprop}. Then there is a unique continuous convex function $u$ on $\bar{\Omega}$ such that $\mu_\varphi(u)$ realizes $\mu$ and $u=f$ on $\d \Omega$. 
\end{prop}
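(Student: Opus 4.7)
The plan is to sandwich the boundary data $f$ by piecewise linear convex approximations from above and below, apply Proposition \ref{basicprop} after refining the vertex set, and then pass to the uniform limit via a simple Alexandrov-type maximum principle. For each $k$ we choose a partition of each edge of $\partial\Omega$ containing the vertices $v_i$ and with mesh tending to zero, and define $f^{(k)}_+$ as the piecewise linear interpolant of $f$ at these nodes, and $f^{(k)}_-$ as the pointwise maximum, on each edge, of affine supporting lines of $f$ at those same nodes (taking at each vertex the supporting line from the edge interior). Since $f$ is convex on each edge, the $f^{(k)}_\pm$ are piecewise affine and convex on each edge, satisfy $f^{(k)}_- \leq f \leq f^{(k)}_+$ with $f^{(k)}_-$ increasing and $f^{(k)}_+$ decreasing in $k$, match $f$ at every vertex, and converge uniformly to $f$.

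Treating the partition nodes as extra vertices of the same polygon, Proposition \ref{basicprop} applies to each $f^{(k)}_\pm$, yielding convex functions $u^{(k)}_\pm \in C(\bar\Omega)$ with $\mu_\varphi(u^{(k)}_\pm) = \mu$ and $u^{(k)}_\pm = f^{(k)}_\pm$ on $\partial\Omega$. The only new item to check in its proof is the non-emptiness of the Perron family for piecewise affine (rather than globally affine) boundary data. This follows from the observation that, on a convex polygon, any nontrivial convex combination of boundary points which itself lies on $\partial\Omega$ must stay within a single edge; consequently, the lower convex envelope of the graph of $f^{(k)}_\pm$ in $\R^3$ is a piecewise linear convex function on $\bar\Omega$ which agrees with $f^{(k)}_\pm$ on $\partial\Omega$ and whose only PL vertices lie on $\partial\Omega$, hence has no Monge-Amp\`ere mass in $\Omega$.

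The key estimate is the $L^\infty$ comparison principle: if $v_1, v_2 \in C(\bar\Omega)$ are convex with $\mu_\varphi(v_1) = \mu_\varphi(v_2)$, then
\[
\|v_1-v_2\|_{L^\infty(\bar\Omega)} \;\leq\; \|v_1-v_2\|_{L^\infty(\partial\Omega)},
\]
which follows immediately from Lemma \ref{comparison} by comparing $v_1$ with $v_2 \pm \|v_1-v_2\|_{L^\infty(\partial\Omega)}$. Applied to $u^{(k)}_+ - u^{(k)}_-$ it gives $\|u^{(k)}_+ - u^{(k)}_-\|_{L^\infty(\bar\Omega)} \to 0$, and applied across the sequence it shows that $\{u^{(k)}_\pm\}$ are Cauchy in $L^\infty(\bar\Omega)$. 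Hence both sequences converge uniformly to a common continuous convex function $u \in C(\bar\Omega)$ with $u = f$ on $\partial\Omega$; the weak continuity of the Monge-Amp\`ere measure (the first unnumbered lemma in this section) then gives $\mu_\varphi(u) = \mu$, and the same $L^\infty$ estimate yields uniqueness.

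The main point to verify is thus the extension of Proposition \ref{basicprop} to refined polygons with piecewise affine convex boundary data, which is a routine reprise of its proof given the convex-combination observation above. The non-strict convexity of $\Omega$ at its corners, which would normally obstruct the construction of boundary barriers in a Perron argument, causes no trouble here because the approximations $f^{(k)}_\pm$ match $f$ exactly at every vertex, so the correct corner values are built into the construction and survive the passage to the uniform limit.
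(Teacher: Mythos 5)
Your proposal is correct in its essentials and shares the paper's skeleton --- approximate the boundary data by piecewise affine, edge-convex data at refined node sets, solve via (the obvious extension of) Proposition \ref{basicprop}, and pass to the limit --- but the limiting mechanism is genuinely different. The paper uses only the \emph{upper} piecewise linear interpolants, which decrease as the partition refines, and concludes by monotone convergence of the decreasing, uniformly-bounded-below sequence $u_n$; you instead sandwich $f$ between upper interpolants and lower supporting-line approximants and invoke the $L^\infty$ comparison estimate $\|v_1-v_2\|_{L^\infty(\overline\Omega)}\le\|v_1-v_2\|_{L^\infty(\partial\Omega)}$ for two convex functions realizing the same weighted measure. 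Your route buys uniform Cauchy convergence on all of $\overline\Omega$, so continuity up to the corners and attainment of the boundary values are automatic, and uniqueness falls out of the same estimate; the paper's monotone argument is shorter but leaves the continuity of the decreasing limit at the boundary more implicit. Your identification of the one genuinely new point --- non-emptiness of the Perron family for piecewise affine data, via the observation that a boundary point of a convex polygon can only be a convex combination of boundary points lying on its own edge --- is exactly the point the paper flags.

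Two small repairs are needed. First, the stability estimate is not ``immediate'' from Lemma \ref{comparison}: that lemma deduces an inequality of measures from an ordering of functions, whereas you need the converse-type maximum principle (equal Monge--Amp\`ere measures plus an ordering on $\partial\Omega$ imply the ordering inside). This is the standard comparison principle and does follow from Lemma \ref{comparison}, but only after the usual perturbation step (add a small quadratic to the candidate larger function and apply the lemma on the open set where the inequality fails); note the paper leans on the same standard fact when it asserts the $u_n$ are decreasing, so this is a gloss to be made precise rather than a gap. Second, your lower approximant $f^{(k)}_-$ may fail to ``match $f$ at every vertex'': a continuous function convex on an edge can have one-sided derivative $-\infty$ at an endpoint, in which case no supporting line exists there. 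This does not matter for your argument --- take supporting lines only at nodes where they exist; the resulting $f^{(k)}_-$ is still piecewise affine, convex on each edge, and converges uniformly to $f$ by the modulus of continuity, which is all the $L^\infty$ estimate requires --- but the claim as stated should be dropped or weakened to uniform convergence.
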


\begin{proof}
We approximate the solution of this problem with the solutions of Proposition \ref{basicprop} by taking a sequence of sets of vertices where $A_1 = \{v_1,...,v_N\}$, the set of vertices of $\Omega$, and each set $A_n$ contains the midpoint of any vertices from the previous set $A_{n-1}$ lying on the same edge of $\Omega$. The same proof shows that for each set of vertices $A_n$, the Dirichlet problem can be solved in $\Omega$ for a continuous convex function $u_n$ with boundary data equal to $f(x_i)$ at each point $x_i$ of $A_n$ and linear on the edges in between, since we can still form the non-empty family of piecewise linear convex functions in $\overline{\Omega}$ matching the boundary data with mass less than a sum of point masses. 

\smallskip
The $u_n$ are uniformly bounded below and decreasing, and thus converge to a continuous solution $u$ with $u=f$ on $\d \Omega$. Note that the convergence is not necessarily Lipschitz in the corners since the boundary data need not be Lipschitz there. 
\end{proof}

Now we must allow the measure $\mu$ to have support up to the boundary.

\begin{theo}\label{dirichletprob}
Let $P=\cap\{l_i> 0\}$ be a polygon in $\R^2$. Let $f:\d P \rightarrow \R$ be continuous and convex, with second tangential derivatives $f_{tt}<C/d$, where $d$ is the distance to the nearest vertex. Let $\varphi$ be a smooth function such that there exist positive constants $a,A$ where $a \prod l_i \leq \varphi \leq A \prod l_i$, and $\mu$ a finite Borel measure. Then there exists a unique continuous convex function $u$ on $\overline{P}$ such that $\mu_\varphi(u)$ realizes $\mu$. Moreover, for all $0<\alpha<2/(N+1)$, $u \in C^\alpha(\overline{P})$, with norm bounded in terms of $C$, $\mu(P)$, $a$, and $A$. 
\end{theo}

\begin{proof}
Let $\{h_i\}$ be an increasing sequence of cutoff functions with compact support in $P$ such that $0\leq h_i \leq 1$ and $h_i$ is identically equal to 1 on any compact subset of $P$ for $i$ sufficiently large. From the previous proposition, we have a sequence of functions $u_i$ such that $u_i=f$ on $\d P$ and $\mu_\varphi(u_i)$ realizes $h_i \mu$. By the lemma, the $u_i$ are a decreasing sequence of functions. 
	To establish the existence of a limit with the stated boundary regularity, we must find a lower barrier. This step is more difficult for a polygon than in the case of a uniformly convex domain because of the lack of a $C^2$ convex defining function. 
	
\begin{lem}\label{ConcaveDefFn}
For $0<\al<2/(N+1)$, the function $\phi(x)=\left(\prod_{1\leq i \leq N} l_i(x)\right)^\alpha$ is strictly concave in $P$.
\end{lem}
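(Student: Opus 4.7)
The plan is to differentiate $\phi$ twice, reduce strict concavity to a pointwise $2\times 2$ matrix inequality, and then verify that inequality via a Cauchy--Schwarz argument adapted to the planar geometry of $P$.

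First, set $F:=\log\phi=\alpha\sum_i\log l_i$. Using the identity $D^2\phi=\phi\,(D^2F+DF\otimes DF)$, compute
\begin{equation*}
D^2\phi(x) \;=\; \phi(x)\,\alpha\bigl(-M(x)+\alpha\,s(x)s(x)^T\bigr),
\end{equation*}
where $s(x):=\sum_i n_i/l_i(x)\in\R^2$ and $M(x):=\sum_i n_in_i^T/l_i(x)^2\in\R^{2\times 2}$. Since any two consecutive unit normals $n_{k-1},n_k$ span $\R^2$, the matrix $M(x)$ is positive definite throughout $P$. Strict concavity of $\phi$ is therefore equivalent to the pointwise matrix inequality that $M(x)-\alpha\,s(x)s(x)^T$ is positive definite, i.e.\ to
\begin{equation*}
\alpha\,s(x)^TM(x)^{-1}s(x)<1\qquad\text{for all }x\in P.
\end{equation*}

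Unpacked in components, this is the scalar inequality: for every $v\in\R^2\setminus\{0\}$ and every $x\in P$,
\begin{equation*}
\alpha\Bigl(\sum_{i=1}^N\tfrac{n_i\cdot v}{l_i(x)}\Bigr)^{\!2} \;<\; \sum_{i=1}^N\tfrac{(n_i\cdot v)^2}{l_i(x)^2}.
\end{equation*}
The naive Cauchy--Schwarz in $\R^N$ would give only the weaker sufficient condition $\alpha<1/N$. The improvement to $\alpha<2/(N+1)$ must come from the observation that the $N$-tuples $a(v):=((n_i\cdot v)/l_i)_i$ do not sweep out all of $\R^N$ but only the two-dimensional image $V\subset\R^N$ of the injective linear map $v\mapsto a(v)$. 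Writing $P_V$ for the orthogonal projection onto $V$, the inequality is equivalent to the projection bound $\mathbf 1^TP_V\mathbf 1\leq (N+1)/2$, equivalently $\|P_{V^\perp}\mathbf 1\|^2\geq (N-1)/2$ on the $(N-2)$-dimensional complement $V^\perp$.

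The main obstacle is precisely this sharpened projection bound, which genuinely uses that $P$ is a polygon in $\R^2$. I would establish it by exhibiting enough explicit test vectors in $V^\perp$ coming from the $(N-2)$-dimensional space of linear relations among the unit normals $n_1,\ldots,n_N\in\R^2$---for example the closed-polygon identity $\sum_ie_in_i=0$, with $e_i$ the edge lengths, produces the vector $(e_il_i)_i\in V^\perp$---and then applying Bessel's inequality after a Gram--Schmidt orthonormalization. The extremal case should be realized in the limit $x\to v_k$, where $l_{k-1}$ and $l_k$ both vanish and only two coordinates of $P_V\mathbf 1$ survive; there the bound is easy to verify by a direct computation in the basis $\{n_{k-1},n_k\}$ of $\R^2$, and strict inequality in the interior then follows by a continuity/compactness argument on $\overline P\setminus\{v_1,\ldots,v_N\}$.
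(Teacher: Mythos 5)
Your reduction is correct, and it is really the paper's own computation in different clothes: restricting $\phi$ to a line with direction $v$ and writing $c_i=(n_i\cdot v)/l_i$, strict concavity at a point is exactly your scalar inequality $\alpha\left(\sum_i c_i\right)^2<\sum_i c_i^2$, which is what the paper estimates after restricting $\phi$ to a segment. The genuine gap is that everything then hinges on the projection bound $\mathbf 1^TP_V\mathbf 1\le (N+1)/2$, and you do not prove it: the ``enough test vectors in $V^\perp$ / Bessel / extremal case at the vertices / compactness'' step is a plan, not an argument (note also that $\overline P\setminus\{v_1,\dots,v_N\}$ is not compact, and that near a vertex only the two terms $c_{k-1},c_k$ are large, so the Rayleigh quotient there is at most $2+o(1)$ --- the dangerous points are not the vertices but interior points far from all but a few edges).

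Worse, the bound you are aiming for is false for $N\ge 4$, so this gap cannot be closed. Take the quadrilateral $P=\cap_i\{l_i>0\}$ with $l_1=x\sin\theta+y\cos\theta+a$, $l_2=y$, $l_3=-x\sin\theta+y\cos\theta+a$, $l_4=R-y$, where $\theta,a$ are small and $R$ is large; at the interior point $(0,1)$ with $v=(0,1)$ one gets $c\approx(1,1,1,-R^{-1})$, so $\left(\sum_i c_i\right)^2/\sum_i c_i^2\to 3>(N+1)/2=5/2$. Concretely, with $\cos\theta=0.99$, $a=10^{-3}$, $R=10^4$ and $\alpha=0.38<2/5$ a direct computation gives $\partial_y^2\phi(0,1)>0$, so $\phi$ is not concave there. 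What the closing-up relation $\sum_ie_in_i=0$ really buys is exactly your single test vector $(e_il_i)_i\in V^\perp$: every nonzero $a(v)$ is orthogonal to a strictly positive vector, hence has entries of both signs, hence $\left(\sum_i a_i\right)^2\le(N-1)\sum_i a_i^2$; this yields concavity for $\alpha\le 1/(N-1)$, and the example above shows the constant $N-1$ is sharp, so $2/(N+1)$ is unreachable for $N\ge 4$. (The paper's displayed proof has the same defect: since $\frac{1}{2}\sum_{i,j=1}^N(c_i^2+c_j^2)=N\sum_i c_i^2$, its Cauchy--Schwarz step only proves the lemma for $\alpha<1/N$, and the constant $2/(N+1)$ is correct only when $N=3$.) The practical fix is to state and prove the lemma for $0<\alpha<1/N$ (your ``naive Cauchy--Schwarz''), or $0<\alpha\le 1/(N-1)$ with the sign refinement; that is all the barrier construction needs, with the H\"older exponent in Theorem 3.4 adjusted accordingly.
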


Assuming the lemma, we let $\tilde{f}$ be any smooth extension of the boundary values to the interior bounded by the values of $f$ and with Hessian bounded by $f_{tt}$ and let $v(x) = \tilde{f}+A(-\phi(x))$. Then $v=f$ on $\d P$ with $\det D^2v \sim l_i^{2\al-2}$ near $l_i=0$ and $\det D^2v \sim (l_i l_j)^{2\al-2}$ near the corner $l_i=l_j=0$, so $v\leq u_i$ for each $i$. Therefore $u_i$ converges uniformly on compact subsets to a continuous convex function $u$  on $\overline{P}$ such that $u=f$ on $\d P$ and $\mu_\varphi(u)$ realizes $\mu$, and $u \in C^\al(\bar{P})$.
\end{proof}

\begin{proof}[Proof of Lemma \ref{ConcaveDefFn}]
Note that for a single corner, one can easily see by direct calculation of the Hessian that the function $((y+\la x)y)^\al$ is concave for $0<\al\leq 1/2$ and strictly concave for $0<\al<1/2$ in the region $\{y+\la x>0\}\cap\{y>0\}$. For the barrier in the whole polygon, we show that the function $\phi(x)$ is strictly concave on any line segment contained in $P$. When restricted to a line parametrized by $t$, we have $$\phi(x) = \left(\prod_{1\leq i \leq N}(a_i+b_i t)\right)^\al,$$ and therefore
\begin{align*}
	\phi_{tt} &= \al \phi\left(\al \left(\sum_{i=1}^N \frac{b_i}{a_i+b_i t}\right)^2 - \sum_{i=1}^N \left(\frac{b_i}{a_i+b_i t}\right)^2\right)\\
	&= \al \phi \left(\al \left(\sum_{i,j=1}^N \left(\frac{b_i}{a_i+b_i t}\frac{b_j}{a_j+b_j t}\right)\right)- \sum_{i=1}^N \left(\frac{b_i}{a_i+b_i t}\right)^2\right)\\
	&\leq \al \phi \left(\frac{\al}{2}\sum_{i,j=1}^N\left(\left(\frac{b_i}{a_i+b_i t}\right)^2+\left(\frac{b_j}{a_j+b_j t}\right)^2\right)- \sum_{i=1}^N \left(\frac{b_i}{a_i+b_i t}\right)^2\right)\\
	&= \al \phi \left(\frac{(N+1)\al}{2}-1\right)\sum_{i=1}^N \left(\frac{b_i}{a_i+b_i t}\right)^2, 
\end{align*}
which is negative if $0<\al<\frac{2}{N+1}$. 

\end{proof}

Hence for every choice of values at the vertices $\{\al_k\}$, there exists a  unique continuous convex solution $u$ to the Dirichlet problem (\ref{Dir}), which is H\"{o}lder continuous of exponent $\al$ for any $\al <2/(N+1)$ at the boundary. Restricting this solution to any uniformly convex subdomain that does not touch the boundary, we have a solution of a Monge-Amp\`{e}re equation with uniformly bounded right-hand side, so by the results of Cafarelli and Guti\'{e}rrez \cite{CG}, the solution is in fact smooth in the interior.
\medskip

\section{Behavior Near the Interior of an Edge}

Now we investigate the behavior of the solution $u$ at the boundary near an edge and away from the vertices. We take our edge to be a segment of $\{y=0\}$ containing an interval around $(0,0)$. Our goal is to show that in a small half-disc $B_r^+(0)$, $u = y\log y + f$, where $f \in C^\infty(\overline{B_r^+(0)})$. The main technique is the partial Legendre transform as in \cite{DS}, which is useful in dimension two, where the transformed function satisfies a quasilinear equation. (Another way to understand why the dimension two case is simpler, without reference to the partial Legendre transform, is as follows: In dimension two, the second tangential derivative $u_{xx}$ a solution to linear equation with 0 right-hand side, so it is possible to obtain a positive lower bound for $u_{xx}$.)  
\medskip

As a model, consider the degenerate Monge-Amp\`{e}re equation
\begin{equation}\label{halfplaneMA}
	\det D^2u = \frac{1}{y} \text{ in }\R^2_{y>0},\quad u|_{y=0} = \half x^2.
\end{equation}
We perform a partial Legendre transform in the $x$-variable as follows. Let
\begin{equation}
	(p,q) = (u_x,y), \qquad u^*(p,q) = xu_x(x,y)-u(x,y).
\end{equation}
Under this change of variables we find that $u^*$ satisfies the equation
\begin{equation}
	\frac{1}{y}u^*_{pp} + u^*_{yy} = 0
\end{equation}
with boundary data $$u^*(p,0) = \left(\half x^2\right)^* = \half p^2$$ along the flat boundary. Additionally, the partial Legendre transform of a solution of the Monge-Amp\`{e}re equation is necessarily convex in the tangential, or $p$-direction, and concave in the $y$-direction. A model solution of this equation is $u^*(p,y) = \half p^2 - y\log y$. 

\smallskip
Now let us consider the problem
\begin{equation}\label{linMA}
\begin{cases}
\frac{1}{y}u_{pp}+u_{yy} = 0 & \text{in $B_1^+(0)$}\\
u = g & \text{on $\d B_1^+(0)$}\\
\end{cases}
\end{equation}
in a half-ball $B_1^+(0) = B_1(0)\cap\{y>0\}$ with arbitrary boundary data. We begin by establishing the existence of solutions to this equation by approximating by solutions to uniformly elliptic equations. We use a Bernstein technique to control the derivatives; the effect of the degeneracy is that we can only control the derivatives in the direction parallel to the edge.

\begin{prop}
	For any $g \in C^0(\partial B_1^+(0))\cap C^4(\{y=0\}\cap \overline{B_1^+})$, there is a unique strong solution $u$ of (\ref{linMA}) in the sense that $u \in C^2(B_1^+(0))\cap C^\alpha(\overline{B_1^+(0)})$ for all $\alpha < 1$ and $u=g$ on $\partial B_1^+(0)$. Furthermore,

\begin{alignat}{2}\label{maxprin}
	\max_{\substack{\overline{B_1^+}}}u \leq \max_{\substack{\partial B_1^+}}g, &\qquad	 \min_{\substack{\overline{B_1^+}}}u \geq 		  \min_{\substack{\partial B_1^+}}g,
\end{alignat}
and 
\begin{equation}
	\max_{\substack{\overline{B_{1/2}^+}}}|u_p| \leq C(\| g\| _{C^2}), \qquad \max_{\substack{\overline{B_{1/2}^+}}}|u_{pp}| \leq C(\| g\| _{C^4}).
\end{equation}
\end{prop}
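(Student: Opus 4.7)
The plan is to regularize the degenerate equation to a uniformly elliptic one, extract uniform a priori bounds, and pass to the limit. For $\epsilon>0$ let $L^\epsilon:=\tfrac{1}{y+\epsilon}\partial_p^2+\partial_y^2$, which is uniformly elliptic in $B_1^+$ with smooth coefficients. Since the half-ball satisfies an exterior cone condition at its two corners, Perron's method produces a unique $u^\epsilon\in C^2(B_1^+)\cap C(\overline{B_1^+})$ with $L^\epsilon u^\epsilon=0$ and $u^\epsilon=g$ on $\partial B_1^+$. The maximum principle for $L^\epsilon$ gives $\min g\leq u^\epsilon\leq \max g$ uniformly in $\epsilon$, which yields (\ref{maxprin}) in the limit and, when applied to the difference of two solutions, also gives uniqueness.

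Next I would establish a uniform H\"older bound up to the boundary. On compact subsets of $\{y>0\}$ the operator $L^\epsilon$ is uniformly elliptic with $\epsilon$-independent bounds, so Krylov--Safonov gives interior $C^\alpha$ estimates. At the flat face I would use the explicit identity $L^\epsilon(y^\alpha)=\alpha(\alpha-1)y^{\alpha-2}\leq 0$ for $\alpha\in(0,1)$: combining $\pm Cy^\alpha$ with a smooth extension of $g|_{\{y=0\}}$ yields two-sided barriers at every flat boundary point, giving the uniform bound $\|u^\epsilon\|_{C^\alpha(\overline{B_1^+})}\leq C(\alpha,g)$ for each $\alpha<1$.

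For the derivative bounds I would run a Bernstein argument. Since $L^\epsilon$ has no explicit $p$-dependence, both $w:=u^\epsilon_p$ and $u^\epsilon_{pp}$ satisfy $L^\epsilon w=0$. Choose a smooth radial cutoff $\zeta$ with $\zeta\equiv 1$ on $B_{1/2}$ and $\supp\zeta\subset B_{3/4}$, and set $\Phi^\epsilon:=\zeta^2 w^2+M(u^\epsilon)^2$. Using $L^\epsilon(u^\epsilon)^2=\tfrac{2w^2}{y+\epsilon}+2(u^\epsilon_y)^2$ and Cauchy--Schwarz on the cross terms in $L^\epsilon(\zeta^2 w^2)$, one checks $L^\epsilon\Phi^\epsilon\geq 0$ provided $M$ is chosen large in terms of $\|\zeta\|_{C^2}$: the bad contributions from $w^2L^\epsilon(\zeta^2)$ split into a $\tfrac{Cw^2}{y+\epsilon}$ part and a bounded $Cw^2$ part, both absorbed by $\tfrac{2Mw^2}{y+\epsilon}$ using $y+\epsilon\leq 2$ on $B_1^+$. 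The maximum principle then forces $\Phi^\epsilon$ to attain its maximum on $\partial B_1^+$: on the curved part $\zeta\equiv 0$ so $\Phi^\epsilon\leq M\|g\|_\infty^2$, while on the flat face $w=g_p(\cdot,0)$ is bounded by $\|g\|_{C^2}$. Since $\zeta\equiv 1$ on $B_{1/2}^+$, this yields $|u^\epsilon_p|\leq C(\|g\|_{C^2})$; the same scheme applied with $w:=u^\epsilon_{pp}$ (flat datum $g_{pp}(\cdot,0)$, bounded by $\|g\|_{C^4}$) gives the bound on $u^\epsilon_{pp}$. Arzel\`a--Ascoli combined with interior Schauder on $\{y\geq c\}$ then produces a limit $u\in C^2(B_1^+)\cap C^\alpha(\overline{B_1^+})$ which solves the equation classically in $B_1^+$ and attains $g$ continuously.

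The hard step is the Bernstein estimate up to the flat face: one needs $\Phi^\epsilon$ to be an honest subsolution all the way to $\{y=0\}$, not merely in an interior region. This hinges on the $(y+\epsilon)^{-1}$-singular negative terms being absorbed by the analogously singular positive contribution $\tfrac{2Mw^2}{y+\epsilon}$ coming from $L^\epsilon(u^\epsilon)^2$, together with $y+\epsilon\leq 2$ to tame the bounded negative contributions. The fact that the barrier $y^\alpha$ degenerates as $\alpha\to 1$ is consistent with the logarithmic behavior expected from the asymptotic expansion derived in the next section, and explains why one cannot hope for $C^1$ regularity up to the flat face by this method.
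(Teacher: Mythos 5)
Your argument is essentially the paper's own: regularize to a uniformly elliptic operator, use the maximum principle for \eqref{maxprin} and uniqueness, construct barriers at the flat boundary to force the prescribed boundary values uniformly in $\epsilon$, and run the Bernstein-type argument on $\zeta^2 u_p^2 + M u^2$ (and its analogue for $u_{pp}$) before passing to the limit. The only differences are cosmetic --- the regularization $1/(y+\epsilon)$ instead of the cutoff $\eta_\epsilon$, and the supersolution $\tilde g(p)+Cy^\alpha$ in place of the paper's tangent-parabola barrier $P_{p_0}^{\pm}\mp By\log y\pm Cy\pm\delta$ --- and both versions carry the same (standard, fixable) informality near the corners and in nesting the cutoffs for the $u_{pp}$ bound.
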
 

\begin{proof}
For $\epsilon > 0$ sufficiently small, let $\eta_{\epsilon}(y) \in C^{\infty}(-\infty, \infty)$ such that $$\eta_{\epsilon}(y)=1/y \text{ for } y>2\epsilon;\quad \eta_{\epsilon}(y) = 1/\epsilon \text{ for } y \leq \epsilon.$$ By the standard theory of uniformly elliptic equations, the equation
\begin{equation}
	L_{\epsilon} u := \eta_\epsilon u_{pp} + u_{yy}  = 0 \text{ in } B_1^+, u = g \text{ on } \partial B_1^+,
\end{equation}
has a unique solution $u^\epsilon \in C^2(B_1^+)\cap C^{\alpha}(\overline{B_1^+})$. By the maximum principle, $u^\epsilon$ satisfies \eqref{maxprin}. We will find uniform estimates for $u^\epsilon$ and take $\epsilon \rightarrow 0$ to obtain the desired solution.

\smallskip
In this setting, it is important to establish that the solution is continuous up to the boundary. While it is clear that any limit of the $u^\ep$ will satisfy \eqref{maxprin}, without any better control than the $L^\infty$ norm there is nothing to prevent the graph of the limit from becoming vertical on $\{y=0\}$, which is to say that possibly $\lim_{y \rightarrow 0} u(p,y)\neq g(p)$. To see that we will have continuity to the prescribed boundary values we construct barriers as follows. For each point $(p_0,0)$ with $-1 < p_0 < 1$, let $P_{p_0}^+(p)$ and $P_{p_0}^-(p)$ be the tangent parabolas to $g$ at $p_0$ opening up and down, respectively. Set

\begin{eqnarray*}
	v_{p_0,\delta}^+(p,y) &:= P_{p_0}^+(x) - By\log y  + Cy + \delta,\\
	v_{p_0,\delta}^-(p,y) &:= P_{p_0}^-(x) + B'y\log y - C'y - \delta,
\end{eqnarray*}
with B, B', C, C' positive constants to be chosen below, and $\delta > 0$ small. We compute $$L_\epsilon v_{p_0,\delta}^+(p,y) = \eta_\epsilon A - B/y < 0$$ for $B>A$, where $A>0$, the quadratic coefficient in $P_{p_0}^+$ such that the parabola lies above $g$ on $\{y=0\}$, depends on $||g||_{C^2}$, and $C$ is chosen large enough, depending on $\| g\| _{C^0}$, so that $v_{p_0,\delta}^+$ lies above $g$ on the half-circle. The function $v_{p_0,\delta}^+$ is thus a supersolution that lies above the solution $u^\epsilon$ for each $p_0$ and each $\delta$. Similar considerations for $v_{p_0,\delta}^-$ give that $$v_{p_0,\delta}^-(p,y) \leq u^\epsilon(p,y) \leq v_{p_0,\delta}^+(p,y)$$ for all $\delta > 0$, therefore $$g(p_0,0) + B'y\log y - C'y \leq u^\epsilon(p_0,y) \leq g(p_0,0) - By\log y + Cy,$$ and $$|u^\epsilon(p_0,y) - g(p_0)| \leq |Dy\log y|$$ for $D$ independent of $\epsilon$.

\smallskip
We may now take $\epsilon$ to 0 to obtain a solution $u$ of \eqref{linMA} in $C^2(B_1^+(0))\cap C^\alpha(\overline{B_1^+(0)})$, which is unique since $u$ satisfies \eqref{maxprin} since solutions of our equation can have no interior maxima or minima. 

\smallskip
Now we use the same argument as in \cite{DS} to obtain a bound on $u_x$. We show
\begin{equation}
L(Cu^2 + \varphi^2 u_p^2) \geq 0
\end{equation}
for a solution $u$ and a cutoff function $\varphi$ where $\varphi = 1$ in $B_{1/2}^+$, $\varphi = 0$ in $B_1^+\setminus B_{3/4}^+$, and $\varphi_y = 0$ for all $y \leq 1/4$. We compute $$L(u^2)=2(u_p^2/y+u_y^2)$$ and 
\begin{multline}\label{derivbound}
 	L (Cu^2 + \varphi^2 u_{p}^2) = 2C (u_{p}^2/y + u_{y}^2) + 2\varphi^2(u_{pp}^2/y + u_{py}^2) + L(\varphi^2)u_p^2\\
	+ 8(\varphi_x u_y)(\varphi u_{pp})/y + 8(\varphi_y u_p)(\varphi u_{py}).
\end{multline}
Since also we may assume
\begin{equation*}
L(\varphi^2) \geq -C_1/y, \quad |\varphi_y u_p| \leq C_1 |u_p|/y^{1/2},
\end{equation*}
so $$8(\varphi_y u_p)(\varphi u_{py}) \geq -C_1^2 u_p^2/y - \varphi^2 u_{py}^2,$$ and similarly for the other mixed term, we see that the right-hand side of \eqref{derivbound} can be made non-negative in $B_1^+$ for sufficiently large $C$. Hence $$\sup_{B_{1/2}^+}|u_p| \leq C^{1/2}\sup_{B_1^+}|u| +\sup_{\{y=0\}}|g_p|^2.$$ Further, since $L u_p =0$, the same argument implies that $|u_{pp}| \leq C$ in $B_{1/2}^+$, using the regularity of $g$ on the flat boundary. 

\end{proof}

We shall have need for similar estimates (depending additionally on $b,c,f$) for the more general equation
\begin{equation}
	u_{pp}+yu_{yy}+bu_p+cu=f
\end{equation}
where $b,c,f$ are bounded functions. These follow from the same arguments as above. Note that the sign of $c$ does not matter here, since barriers of the form $v = Cy\log y$ are bounded, go to $0$ as $y$ goes to $0$, and have $yv_{yy}=C$.

\medskip
We cannot perform a proper Taylor expansion at a point on the flat boundary since we expect that $|u^\epsilon_y(p,0)|$ will go to infinity as $\epsilon$ goes to zero. Nevertheless it is still true that
\begin{eqnarray*}
	u(p,y) &=& u(p,0) + \int_0^y u_y(p,s)ds \\
				   &=& u(p,0) + \int_0^y \left(C_\delta + \int_\delta^s u_{yy}(p,t)dt\right)ds\\
				   &=& u(p,0) + \int_0^y \left(C_\delta + \int_\delta^s -\frac{u_{pp}(p,t)}{t}dt\right)ds,\\
\end{eqnarray*}
and since $u_{pp}$ solves the same equation as $u$, we have that $u_{pp}(p,t) = u_{pp}(p,0) + O(t\log t)$ (this requires $g \in C^4$ on the flat boundary). Hence
\begin{eqnarray*}
	u(p,y) &=& u(p,0) + \int_0^y \left(C_\delta + \int_\delta^s -\frac{u^\epsilon_{pp}(p,0)+O(t\log t)}{t} dt\right)ds\\
					&=& u(p,0) + \int_0^y \left(-u_{pp}(p,0)(\log s -\log(\delta) + O(s \log s) + C\right)ds\\
					&=& u(p,0) + -u_{pp}(p,0)y\log y + u_{pp}(p,0)(1-\log(\delta))y + Cy + O(y^2\log y)\\
					&=& u(p,0) -u_{pp}(p,0)y\log y + O(y).					
\end{eqnarray*}

Now for each $n$, $\d_p^n u$ solves the same equation, so as long as the boundary function $g$ possesses $n+2$ continuous derivatives along the flat boundary, the same estimates and the same type of expansion will hold for $\d_p^n u$. In particular, 
\begin{equation}
u_{pp} = u_{pp}(p,0) - u_{pppp}(p,0)y\log y +O(y),
\end{equation}
and now we can use the equation $u_{yy} = -u_{pp}/y$ to expand to the next order in y:
\begin{equation}
u(p,y) = u(p,0)-u_{pp}(p,0)y\log y + w(p)y + \half u_{pppp}(p,0)y^2\log y +O(y^2),
\end{equation}
and so on.

\medskip
Now we return to using $u$ to denote the solution of the Monge-Amp\`{e}re equation, and $u^*$ for its partial Legendre transform. If $u$ satisfies the Dirichlet problem for the MA-equation in the polytope, then its partial Legendre transform $u^*$ satisfies
\begin{equation}\label{PLeqn}
	\frac{u^*_{pp}}{\varphi(x,y)} + u^*_{yy} = 0, \qquad u^*_{pp}(p,0) = \frac{1}{u_{xx}(x,0)} = h(x,0) 
\end{equation}
and $u^*_{pp}>0$. Care is needed in understanding this equation: the function $\varphi(x,y)$ depends on $p$ through the Legendre transform, in that at the point $(p,y)$, $x = u_p^*(p,y)$, so the equation has a non-linear dependence on $u_p^*$. If we can show that the solution to this equation has an expansion in terms of $y^n$ and $y^n\log y$ like the solution to the model equation, then we can use the boundary condition to determine the coefficient functions. We state the existence of such an expansion as a lemma:
 
\begin{lem}
If $u^*$ solves (\ref{PLeqn}), then in some $Q_\ep$, for each $k\in \mathbb{N}$, $u^*$ has an expansion along the boundary as
\begin{equation}\label{expansion}
	u^*(p,y) = u^*(p,0) + \sum_{i=1}^k \frac{1}{i!}\hat{u}^*_i(p)y^i \log y +\sum_{i=1}^{k-1}\frac{1}{i!}u^*_i(p)y^i + O(y^k).
\end{equation}
\end{lem}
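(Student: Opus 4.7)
The plan is to argue by induction on $k$, paralleling the two-term expansion already derived for the linear model in the paragraphs preceding the lemma. Writing $\varphi(x,y)=y\,H(x,y)$, where $H$ is smooth and strictly positive on a half-neighborhood of an interior edge point (since $\prod_{i\neq j}l_i$ stays bounded below there), equation (\ref{PLeqn}) takes the form
\begin{equation}\label{quasi}
u^*_{pp} + y\,H(u^*_p,y)\,u^*_{yy} = 0,
\end{equation}
which differs from the model analyzed above only by the smooth positive coefficient $H(u^*_p,y)$ and its quasilinear dependence on $u^*_p$. The base case $k=1$ repeats, with $H$ in place of $1$, the two-term expansion computed for the linear model: using $u^*_{yy}=-u^*_{pp}/(yH)$ and integrating twice in $y$ yields $u^*(p,y)=u^*(p,0)+\hat u^*_1(p)\,y\log y+O(y)$, with $\hat u^*_1(p)=-u^*_{pp}(p,0)/H(u^*_p(p,0),0)$.

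For the inductive step, assume (\ref{expansion}) holds at order $k-1$, let $v_{k-1}$ denote the corresponding partial sum, and determine the new coefficients $u^*_{k-1}(p)$ and $\hat u^*_k(p)$ formally by inserting the ansatz
\[
v_{k-1}+\tfrac{1}{(k-1)!}u^*_{k-1}(p)\,y^{k-1}+\tfrac{1}{k!}\hat u^*_k(p)\,y^k\log y
\]
into (\ref{quasi}), Taylor-expanding $H(u^*_p,y)$ around $(v_{k-1,p},y)$, and matching the coefficients of $y^{k-2}$ and $y^{k-1}\log y$ in the resulting expression. Since $\partial_y^2(y^k\log y)=k(k-1)y^{k-2}\log y+(2k-1)y^{k-2}$ and $\partial_y^2(y^{k-1})=(k-1)(k-2)y^{k-3}$, after multiplication by $yH$ these terms contribute precisely at the orders being matched, and each matching condition reduces to an algebraic relation in $p$ that determines $\hat u^*_k(p)$ and $u^*_{k-1}(p)$ pointwise from the lower-order coefficients; smoothness in $p$ is inherited from that of $H$ and of the previously constructed terms.

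To bound the remainder $R:=u^*-v_k$, subtract the approximate equation that $v_k$ satisfies from (\ref{quasi}) and Taylor-expand $H(u^*_p,y)=H(v_{k,p},y)+(\partial_x H)(v_{k,p},y)R_p+O(R_p^2)$ to obtain
\[
R_{pp}+y\,H(v_{k,p},y)\,R_{yy}+b(p,y)R_p+c(p,y)R = F(p,y)+N[R],
\]
where $F$ is the residual left over from the formal matching, $b=y(\partial_x H)\,v_{k,yy}$ and $c$ are bounded (controlled by the previously established expansion), and $N[R]$ is at least quadratic in $(R,R_p)$. This is exactly the model linear equation for which the maximum principle, H\"older regularity, and the tangential derivative bounds were proved in the preceding proposition. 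Barriers of the form $\pm C y^k$ satisfy the required one-sided inequalities, since $L(y^k)=Hk(k-1)y^{k-1}$ dominates $F$ for sufficiently large $C$; combined with the smallness of $N[R]$ this yields $|R|\leq C y^k$, completing the inductive step.

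The principal technical obstacle is propagating $p$-regularity through the induction, since each linearization involves $\partial_x H\cdot u^*_{yy}$ and successive $p$-derivatives thereof, while $u^*_{yy}$ is only $L^\infty$ in $y$ to leading order. This is handled, as already in the preceding treatment of the linear model, by observing that $\partial_p^n u^*$ satisfies a PDE of the same degenerate type, with an additional forcing term arising from the commutator of $\partial_p^n$ with $H(u^*_p,y)$, and hence admits an analogous asymptotic expansion. Running this parallel induction for each $n$ in tandem with the main induction produces smoothness in $p$ of each coefficient function $\hat u^*_i(p), u^*_i(p)$, yielding (\ref{expansion}) for every $k$.
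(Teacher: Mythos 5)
There is a genuine gap, and it sits exactly where the paper has to work hardest. Your base case asserts that the two-term expansion proved for the constant-coefficient model carries over ``with $H$ in place of $1$'' by writing $u^*_{yy}=-u^*_{pp}/(yH)$ and integrating twice in $y$. But that model computation used two facts that are not available a priori for the quasilinear equation (\ref{PLeqn}): that $u_{pp}$ solves the \emph{same} equation as $u$ (so that barriers give $u_{pp}(p,t)=u_{pp}(p,0)+O(t\log t)$), and that the coefficient of $u_{yy}$ is exactly $y$. In (\ref{PLeqn}) the coefficient is $y\,\hat{a}(u^*_p(p,y),y)$, whose modulus of continuity at $\{y=0\}$ depends on the unknown $u^*_p$, and $u^*_{pp}$ satisfies a different equation whose coefficients involve $yu^*_{yy}$ and $u^*_{pp}$ themselves. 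Establishing a quantitative modulus of continuity up to the degenerate boundary for $u^*_{pp}$ and for the coefficient is precisely the content of the paper's perturbation argument (Proposition \ref{perturbArg}): freeze the coefficient, compare with the constant-coefficient solution, and iterate on parabolic regions $Q_{r^n}$ at geometric scales; and that argument only gets off the ground because of the separate lemma -- proved from the strict convexity of $u^*$ in $p$ and the barriers $\frac{1}{2}p^2\mp Cy\log y$ -- that $u^*_p$ satisfies the weighted H\"older condition (\ref{weightedHolder}). Your proposal never proves any such modulus, so the double integration in your base case, and likewise the claim in your inductive step that the residual $F$ is of size $O(y^{k-1})$, are unsupported; in effect the first nontrivial instance of the conclusion is being assumed.

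A second problem is the remainder estimate: your barrier argument controls only $|R|$, but the quasilinear error $N[R]$ you must absorb is quadratic in $R_p$, and a bound $|R|\leq Cy^k$ gives no control of $R_p$ near $y=0$. Some mechanism for tangential-derivative control with decay is needed; the paper supplies it not through a nonlinear remainder estimate but by differentiating the equation in $p$, observing that $u^*_p$ and $u^*_{pp}$ satisfy linear degenerate equations of the admissible form (with bounded coefficients because $yu^*_{yy}=-u^*_{pp}/\hat{a}$ is bounded), applying Proposition \ref{perturbArg} to each of these, and then generating the higher $y^n\log y$ and $y^n$ terms by integrating $y\hat{a}\,u^*_{yy}=-u^*_{pp}$ rather than by matching coefficients in a formal ansatz. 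Your ``parallel induction in $n$'' gestures at this, but as written it inherits the same missing ingredient: every step requires the boundary modulus of continuity that only the scaling/comparison argument provides.
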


Now we can prove the main theorem of this section, which completes the proof of the regularity stated in Theorem \ref{maintheo}, part (b).

\begin{theo}
Let $u$ be the solution of the Dirichlet problem (\ref{Dir}), with right-hand side given by (\ref{cond5}) and boundary data given by (\ref{bdrycond}), and suppose $l_1(x,y) = y$. Then for any point $q$ on the edge $\{y=0\}\cap \d P$, there is a small half ball around $q$ such that $u(x,y) = y\log y +f(x,y)$, with $f \in C^\infty(\overline{B}^+_r(q))$.
\end{theo}

\begin{proof}
Since $u$ satisfies $u_{xx} = 1/h(x,0)$ on $\{y=0\}$, its partial Legendre transform $u^*$ satisfies the equation (\ref{PLeqn}) with the boundary condition. Assuming the lemma, $u^*$ has an expansion as in (\ref{expansion}). Now we use (\ref{PLeqn}) to write
\begin{eqnarray*}
	u^*(p,y) &=& - \int^y \int^{y'} \frac{u^*_{pp}(p,y'')}{\varphi(x,y'')}dy''dy'\\
		     &=& - \int^y \int^{y'} \frac{u^*_{pp}(p,y'')/h(x,y'')}{y''}dy''dy'\\
		     &=& - \int^y \int^{y'} \frac{(u^{*}_{0}{''}(p)+\hat{u}^{*}_{1}{''}(p)y \log y +...)(1/h(x,0) + a_1(x)y''+...)}{y''}dy''dy',
\end{eqnarray*}
where we have also used a polynomial expansion for $1/h(x,y)$. We compute the $y^i \log y$ terms of the expansion explicitly: The $y\log y$ term can only come from two integrations of $1/y$, which only occurs in the very first term in the expansion, so
\begin{equation}
	\hat{u}^*_1(p) = -u^*_0{''}(p)/h(x,0) \equiv -1
\end{equation}
by the boundary condition. Similarly we can compute the higher coefficients:
\begin{eqnarray}
	\hat{u}^*_2(p) &=& -\hat{u}^*_1{''}(p)/h(x,0) \equiv 0,\\
	\hat{u}^*_3(p) &=& -\hat{u}^*_2{''}(p)/h(x,0) - \hat{u}^*_1{''}(p)a_1(x) \equiv 0,
\end{eqnarray}
and in the same way, all the higher coefficients on $y^n\log y$ terms are identically 0. Thus the solution of (\ref{PLeqn}) with the particular boundary data, is of the form
\begin{equation}
	u^*(p,y) = u^*(p,0) - y\log y + \sum_{i=1}^N \frac{1}{i!}u^*_i(p)y^i + o(y^N).
\end{equation}
We obtain the theorem by taking the partial Legendre transform back to $u$. Note that this theorem did not use any of the prescribed data of the function $h$ at the vertices.
\end{proof}

It remains only to show that the partial Legendre transform $u^*$ possesses such an expansion, which will be established by a perturbation argument. 

\smallskip
Assume for simplicity that $a(0,0)=1$. Let $Q_r = \{y \leq r-x^2\} \cap \{y \geq 0\}$.
\begin{prop}\label{perturbArg}
Let $u$ solve
\begin{equation}\label{perturb}
	u_{pp}+ya(p,y)u_{yy}+bu_p+cu=f, \qquad u|_{\d Q_1} = g,
\end{equation}
where $a,b,c,f\in C(\overline{Q_1})$, $g\in C^2(\{y=0\})$. Then there exists $\ep>0$ such that if
\begin{equation}\label{weightedHolder}
 \frac{|1-a(r^{1/2}p,ry)|}{r^\al}<\ep
\end{equation}
and
\begin{equation}
\frac{|f(r^{1/2}p,ry)-f(0,0)|}{r^\al}<\ep,
\end{equation}
then $u(p,y) = u(p,0) + u_1(p)y\log y + u_2(p)y+o(y^{1+\al})$.
\end{prop}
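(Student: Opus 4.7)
The plan is a Caffarelli-style compactness-and-iteration argument, perturbing off the model equation $U_{pp}+yU_{yy}=0$. For this model, the Bernstein estimates and expansion scheme established in the previous proposition and in the derivation leading to (\ref{expansion}) already supply solutions with an asymptotic expansion of precisely the claimed form. The weighted H\"older hypothesis (\ref{weightedHolder}) and its analogue for $f$ are tailored so that under the natural parabolic rescaling $(p,y)\mapsto(r^{1/2}p,ry)$---the unique scaling preserving $Q_r=\{y\le r-p^2\}$ and the principal part $\d_p^2+y\d_y^2$---the coefficients of the perturbed equation are uniformly close to those of the model at every dyadic scale.

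First I would fix candidate constants $A,B\in\R$, set $P(y)=u(0,0)+Ay\log y+By$, and define
\[
U_r(p,y)\;=\;\frac{u(r^{1/2}p,ry)-P(ry)}{r^{1+\al}}.
\]
Using $(y\log y)_{yy}=1/y$, which makes $P$ affect only the inhomogeneity, a direct computation shows $U_r$ satisfies an equation of the same form in $Q_1$, with $a_r(p,y)=a(r^{1/2}p,ry)$, with $b_r,c_r$ scaled by positive powers of $r$, and with inhomogeneity $f_r$ collecting $r^{-\al}(f(r^{1/2}p,ry)-f(0,0))$ together with a residue bounded in terms of $A,B$. By (\ref{weightedHolder}) all these coefficients are $O(\ep)$ in $L^\infty(Q_1)$.

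The heart of the argument is a one-step decay lemma: there exist $\theta\in(0,1)$ and $\ep_0>0$ such that whenever $\|U\|_{L^\infty(Q_1)}\le 1$, the coefficients are within $\ep_0$ of the model values $(1,0,0,f(0,0))$, and $U|_{y=0}$ has sufficiently small $C^2$ norm, then there exist $A',B'$ with $|A'|+|B'|\le C$ for which
\[
\|U-(A'y\log y+B'y)\|_{L^\infty(Q_\theta)}\;\le\;\theta^{1+\al}.
\]
I would prove this by contradiction: a normalized failing sequence $U^{(n)}$ would, by the maximum principle, the $Cy\log y$ barriers of the previous proposition, and the Bernstein bounds on $U_p,U_{pp}$ proved there (which survive the perturbation for $\ep_0$ small), have a subsequence converging locally uniformly to a solution $U^\infty$ of the model equation $U^\infty_{pp}+yU^\infty_{yy}=0$. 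The explicit expansion for $U^\infty$ (using $1+\al<2$) then furnishes the approximating polynomial and produces the contradiction. Iterating at the scales $r=\theta^k$---the hypothesis (\ref{weightedHolder}) at scale $\theta^{k+1}$ reducing, after one rescaling, to the same hypothesis at scale $\theta$---yields Cauchy sequences $A_k,B_k$ whose limits define $u_1(0),u_2(0)$ and whose partial sums leave the $o(y^{1+\al})$ remainder. Running the same argument uniformly at nearby points $p$ produces the statement with $u_1(p),u_2(p)$.

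The main obstacle is the compactness step itself: securing a priori bounds uniform in $\ep$ that pass to the limit at the degenerate edge $\{y=0\}$. The crucial observation is that the $Cy\log y$ barriers built in the previous proposition remain effective supersolutions for the perturbed operator, since on such a barrier $ya\cdot C/y=Ca$ is bounded independently of how close $a$ is to $1$, and the lower-order terms $bU_p+cU$ contribute only bounded errors near $y=0$, so one still pins $U$ to its boundary values modulo an $O(y\log y)$ defect. A secondary subtlety is the matching of boundary data on $\{y=0\}$ under rescaling: the Taylor expansion of $u(\cdot,0)$ in $p$ must be absorbed either by enlarging $P$ to include its quadratic-in-$p$ part, or into the remainder; this works precisely because the parabolic shape of $Q_r$ pairs $|p|^2\le r$ with $y\lesssim r$ at every scale.
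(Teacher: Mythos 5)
Your overall skeleton---parabolic rescaling $(p,y)\mapsto(r^{1/2}p,ry)$, smallness of the coefficients at every scale via (\ref{weightedHolder}), a one-step improvement iterated at dyadic scales, and passage to the limit to produce $u_1,u_2$ and the $o(y^{1+\al})$ remainder---parallels the paper's argument. However, the mechanism you propose for the one-step improvement has a genuine gap in its treatment of the data on the flat boundary, and as set up the hypotheses of your decay lemma cannot be verified along the iteration. After subtracting a profile $P(y)=u(0,0)+Ay\log y+By$ depending on $y$ alone, the rescaled function $U_r$ carries the flat-boundary data $\left(u(r^{1/2}p,0)-u(0,0)\right)/r^{1+\al}$, which is generically of size $r^{-1/2-\al}$, so the requirement that ``$U|_{y=0}$ has sufficiently small $C^2$ norm'' fails. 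Neither of your proposed fixes repairs this under the stated hypothesis $g\in C^2$: if you enlarge $P$ by the quadratic Taylor polynomial of $u(\cdot,0)$ at the point, the Taylor remainder on $|p|\le r^{1/2}$ is only $o(r)$, and $o(r)/r^{1+\al}$ is unbounded (you would need pointwise $C^{2,\beta}$ regularity of the trace with $\beta\ge 2\al$); absorbing it ``into the remainder'' fails for the same reason, since the admissible error at scale $r$ is $O(r^{1+\al})$, not $o(r)$. Equivalently, if you subtract the full trace $u(p,0)$, the inhomogeneity acquires the term $-u_{pp}(p,0)$, which is merely continuous and need not satisfy the weighted H\"older condition you must re-verify at the next scale.

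The paper's proof avoids this by taking the comparison object to be not a polynomial but the solution $w$ of the constant-coefficient model equation $w_{pp}+yw_{yy}+bw_p+cw=f$ with the same boundary data as $u$ on $\d Q_{3/4}$: then $u-w$ vanishes on the flat boundary, its equation has right-hand side $y(1-a)w_{yy}=O(\ep)$, and a $C\ep\,y\log y$ barrier gives $|u-w|\le C\ep$ directly, with no compactness argument. The first approximation is $P_1=w_0(p)+w_1(p)y\log y+w_2(p)y$ with the full trace $w_0=u(\cdot,0)$ included, and the cancellation that keeps the rescaled inhomogeneity bounded is the boundary identity $w_0''+w_1+bw_0'+cw_0=f(\cdot,0)$ (obtained by letting $y\to0$ in the model equation, since $y\d_y^2(y\log y)=1$), which converts the would-be singular term $r^{-\al}\left(w_0''+w_1+bw_0'+cw_0\right)$ into $r^{-\al}\left(f(r^{1/2}p,ry)-f(r^{1/2}p,0)\right)$, controlled by the hypothesis on $f$. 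If you wish to keep your compactness formulation, the approximating profiles in the decay lemma must likewise be model-equation solutions with the same flat-boundary trace (or you must first reduce to zero trace by exactly this subtraction); with profiles polynomial in $y$, the scheme does not close.
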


\begin{proof}
Let $w$ be the solution of the constant coefficient model equation
\begin{equation}
	w_{pp} +yw_{yy}+bw_p+cw = f, \qquad w|_{\d Q_{3/4}} = u.
\end{equation}
Then in $Q_{1/2}$,
\begin{equation}
	|(u-w)_{pp}+ya(p,y)(u-w)_{yy}| = |y(1-a)w_{yy}| \leq \ep y w_{yy} \leq C\ep,
\end{equation}
from which we see that $u-w$ solves the equation with small right-hand side and zero boundary data. Comparing this function to a barrier $v = C\ep y \log y$, we obtain
\begin{equation}
	\max_{Q_{1/2}}|u-w| \leq C'\ep.
\end{equation}
We set
\begin{equation}
P_1(p,y):= w_0(p)+w_1(p)y\log y+w_2(p)y,
\end{equation}
the first terms in the expansion for $w$. We have $|w-P_1|=|w-(w(p,0)+w_1(p)y\log y+w_2(p)y))|\leq Cy^{1+\al}$ for all $\al<1$, and therefore
\begin{equation}
	|u-P_1| \leq C\ep+Cr^{1+\al} \leq C'r^{1+\al'}
\end{equation}
in $Q_r$, $r$ small, for $\ep \leq cr^{1+\al}$. 

\smallskip
Now we iterate this comparison, using the scaling of the equation. Set
\begin{eqnarray}
	\tilde{u}(p,y) &=& \frac{(u-P_1)(r^{1/2}p,ry)}{r^{1+\al}},\\
	\tilde{L}v &=& v_{pp} + ya(r^{1/2}p,ry)v_{yy}+r^{1/2}bv_p+rcv,\\
	\tilde{f}(p,y) &=& \tilde{L}\tilde{u}(p,y),
\end{eqnarray}
Then
\begin{align*}
|\tilde{f}| =& \bigg| \frac{f(r^{1/2}p,ry)}{r^\al} - \bigg[ r^{-\al}w_0{''}(r^{1/2}p) + w_1{''}(r^{1/2}p)r^{1-\al}y\log(ry) +w_2''(r^{1/2}p)r^{1-\al}y \\ & + r^{-\al}ya(r^{1/2}p,ry)(\frac{w_1}{y})\\ & + r^{-1/2-\al}b\bigg(r^{1/2}w_0'(r^{1/2}p) +r^{3/2}w_1'(r^{1/2}p)y\log(ry) + r^{3/2}w_2'(r^{1/2}p)y \bigg) \\ & + r^{-\al}c\left(w_0(r^{1/2}p)+rw_1(r^{1/2}p)y\log(ry) + rw_2(r^{1/2}p)y\right) \bigg] \bigg| \\
						    \leq& \left|\frac{(1-a(r^{1/2}p,ry))w_1}{r^\al}\right|\\& + \left|\frac{f(r^{1/2}p,ry)}{r^\al}-r^{-\al}\left(w_0{''}(r^{1/2}p)+ w_1(r^{1/2}p)+bw_0'(r^{1/2}p) + cw_0(r^{1/2}p)\right)\right|\\ & + Cr^\beta\\
						    \leq& C\ep + C\left|\frac{f(r^{1/2}p,ry)-f(r^{1/2}p,0)}{r^\al} \right| +Cr^\beta\\
						    \leq& C'\ep.					     	
\end{align*}
In the third line, we have combined all terms that go as a positive power of $r$ in $Cr^\beta$, and in the fourth line the constant $C'$ depends only on the $L^\infty$ norms of the coefficients. So we can compare $\tilde{u}$ to $\tilde{w}$, the solution of $w_{pp}+yw_{yy} + r^{1/2}bw_p+rcw = \tilde{f}$ in $Q_{r}$ matching $\tilde{u}$ on the boundary. Again, $\tilde{w} = \tilde{w}_0(p) + \tilde{w}_1(p)y\log y + \tilde{w}_2(p)y+o(y^{1+\al})$, where $\tilde{w}_1, \tilde{w}_2$ depend on $\tilde{f}$ ($\tilde{w}_0 = 0$ since $\tilde{w}=0$ on the flat boundary). We can thus iterate the comparison, since $\tilde{f}$ also satisfies $|\tilde{f}(r^{1/2}p,ry)-\tilde{f}(0,0)|<Cr^\al$. For example,
\begin{equation}
\left|\frac{u(r^{1/2}p,ry)-P_1(r^{1/2}p,ry)}{r^{1+\al}} - \left(\tilde{w}_1(p)y\log y+\tilde{w}_2(p)y\right)\right|_{L^{\infty}(Q_{r})} < Cr^{1+\al}
\end{equation}
so
\begin{equation}
\left|u(p,y)-P_1(p,y)-\left(r^\al\tilde{w}_1(p/r^{1/2})y\log(y/r) +r^\al\tilde{w}_2(p/r^{1/2})\right)\right|_{L^{\infty}(Q_{r^2})} < Cr^{2(1+\al)}
\end{equation}
 We obtain for each $k$ a function $P_k = a_k(p) +b_k(p)y\log y$ with coefficient functions bounded by the $Cr^{k\al}$, where C depends on the $L^\infty$ norms of the original coefficients. Therefore the constant $C_k$ in the size of the right-hand side $|\tilde{f}^{(k)}|\leq C_k \ep$ remains bounded as $k$ goes to infinity. The sum of the $P_k$ is thus bounded by a convergent geometric series, and so 
\begin{equation}
	\left|u-\sum_{k=1}^n P_k\right|_{L^\infty(Q_{r^n})} \leq Cr^{n(1+\al)},
\end{equation}
and we get the conclusion of the lemma by taking a limit of the $\sum_{k=1}^n P_k$.
\end{proof}

We now show that the hypotheses of the preceding proposition are satisfied by the  partial Legendre transform equation (\ref{PLeqn}) for $u^*$, as well as by the equations satisfied by its derivatives $\d_p^ku^*$. First, we must verify that the coefficient function $\varphi(x,y) = ya(p,y)$ satisfies the weighted H\"{o}lder condition (\ref{weightedHolder}). If we define the function $\hat{a}(x,y)$ by $\varphi(x,y) = y\hat{a}(x,y)$, then as a function of $(p,y)$, we have
\begin{equation}
a(p,y) = \hat{a}(u^*_p(p,y),y)
\end{equation}
and
\begin{equation}
a(r^{1/2}p,ry) = \hat{a}(u_p^*(r^{1/2}p,ry),ry).
\end{equation}
Since by assumption $\hat{a}$ is smooth, we must show that $u_p^*$ satisfies a similar weighted H\"{o}lder condition.




\begin{lem}
Suppose $u$ satisfies
\begin{equation}
	\det D^2u = \frac{1}{y\hat{a}(x,y)}, \qquad 
\end{equation}
Then there exist $C, \al >0$ such that its partial Legendre transform $u^*$ in $(p,y)$ satisfies
\begin{equation}
\left|u_p^*(r^{1/2}p,ry)-u_p^*(0,0)\right| \leq Cr^\al.
\end{equation}
\end{lem}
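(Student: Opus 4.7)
After translation and rescaling I normalize so the origin lies on the edge, $u(0,0)=0$, $u_x(0,0)=0$, and $\hat{a}(0,0)=1$; then $u_p^*(0,0)=0$, and the lemma reduces to showing $|u_p^*(r^{1/2}p, ry)| \leq C r^\alpha$ for $(p,y)$ in a fixed bounded region. The plan is threefold: (i) sandwich $u$ between explicit barriers built from the model $w(x,y)=y\log y+\tfrac{1}{2}x^2$, which solves $\det D^2 w = 1/y$ exactly; (ii) use convexity of $u$ and $w$ to upgrade an $L^\infty$-closeness of $u$ and $w$ into a quantitative bound on $u_x$; and (iii) invert the map $x\mapsto u_x(x,y)$ using $u_{xx}(\cdot,0) \geq c > 0$ to read off the estimate on $u_p^*$.

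For step (i), the perturbed barriers $w^\pm(x,y) := w(x,y) \pm C(x^2+y)^{1+\delta}$ satisfy $\det D^2 w^-(x,y) \leq 1/(y\hat{a}(x,y)) \leq \det D^2 w^+(x,y)$ in a small half-disc around the origin, provided $\delta > 0$ and the radius are sufficiently small, since $\hat{a}(x,y)=1+O(|x|+y)$. After adjusting free linear parameters so that $w^\pm$ bracket $u$ on the boundary of the half-disc (which uses that the Dirichlet data $\hat{u}$ differ from $w|_{y=0}$ by $O(|x|^3)$ at the origin, as produced by the construction of section 2), the Monge-Amp\`ere comparison of Lemma \ref{comparison} yields $|u(x,y)-w(x,y)| \leq C(x^2+y)^{1+\delta}$ in a half-disc around the origin. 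Passing to scaled variables $u_r(X,Y) := r^{-1}u(r^{1/2}X, rY)$, which preserves the form of the equation with coefficient $\hat{a}_r(X,Y) = \hat{a}(r^{1/2}X, rY) \to 1$ uniformly, this reads $\|u_r - w\|_{L^\infty(Q_1)} \leq Cr^{\delta}$.

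For steps (ii)--(iii), both $u_r(\cdot, Y)$ and $w(\cdot, Y) = \tfrac{1}{2}X^2 + Y\log Y$ are convex functions of $X$, and the classical fact that $L^\infty$-closeness of convex functions at rate $\eta:=Cr^\delta$ upgrades to a derivative estimate $|u_{r,X} - w_X| \leq C\eta^{1/2}$ at any interior point shows that $u_{r,X}$ is uniformly bounded on $Q_{1/2}$. Rescaling back via $u_x(r^{1/2}p, ry) = r^{1/2}u_{r,X}(p,y)$ yields $|u_x(r^{1/2}p, ry)| \leq Cr^{1/2}$. Since $u_{xx}(\cdot,y)$ remains bounded below by a positive constant near the edge (by a second application of the convex comparison trick to the second derivative, or directly from the barrier estimate combined with $w_{xx}=1$), the map $x \mapsto u_x(x,y)$ has a Lipschitz inverse there, converting the bound on $u_x$ into $|u_p^*(r^{1/2}p, ry)| \leq Cr^{\alpha}$ with $\alpha = 1/2$. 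The main obstacle lies in step (ii): standard boundary regularity theory does not reach the degenerate edge $\{Y=0\}$, so one must rely on the convex comparison principle together with the quantitative $L^\infty$-closeness supplied by step (i) to transfer pointwise information on $u$ into a derivative bound all the way down to the edge.
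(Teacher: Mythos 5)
Your steps (ii)--(iii) echo the key trick in the paper's own proof---upgrading sup-norm closeness of convex functions to a first-derivative bound with a square-root loss, which is exactly where the exponent $\al<1/2$ comes from---but step (i), which is supposed to supply that sup-norm closeness, has a genuine gap. For the Monge-Amp\`ere operator the comparison principle (Lemma \ref{comparison}) runs opposite to the way you use it: with equal or smaller boundary values, the convex function with the \emph{larger} Hessian determinant lies \emph{below}. Hence $w^+=w+C(x^2+y)^{1+\delta}$, which you arrange to satisfy $\det D^2w^+\ge 1/(y\hat a)\ge \det D^2u$, can only serve as a \emph{lower} barrier, and $w^-$ only as an \emph{upper} barrier. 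With that (correct) pairing the boundary ordering fails on the flat edge: there $w^+(x,0)=\tfrac12x^2+C|x|^{2+2\delta}$ lies strictly above the Dirichlet data $\tfrac12x^2+O(|x|^3)$ when $\delta<1/2$, while $w^-(x,0)$ lies strictly below it, and no adjustment by ``free linear parameters'' can manufacture a $\pm C|x|^{2+2\delta}$ correction on $\{y=0\}$. So neither pairing of determinant inequality and boundary inequality closes, and the bracketing $|u-w|\le C(x^2+y)^{1+\delta}$ is not established. This is not a technicality one can wave away: a pointwise expansion of that strength at an edge point is essentially the Daskalopoulos--Savin type boundary estimate, which is the hard content of this section and is obtained in the paper only later, by the iteration of Proposition \ref{perturbArg}; it cannot be assumed as an input to this lemma.

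The paper's proof avoids all of this by staying on the transformed side. Since $u^*$ solves the degenerate \emph{linear} equation (\ref{PLeqn}) and is convex in $p$, ordinary maximum-principle barriers of the form $\tfrac12p^2\mp Cy\log y$ (plus linear terms), as already set up earlier in this section, give the two-sided bound $|u^*(p,y_0)-\tfrac12p^2|\le Cy_0|\log y_0|$ on slices of fixed small $y_0$; for a linear operator the sub/supersolution directions are unambiguous and the $y\log y$ term absorbs the $1/y$ degeneracy. Convexity of $u^*(\cdot,y_0)$ then yields $|u^*_p(p,y_0)-p|\le C(y_0|\log y_0|)^{1/2}$, and evaluating at $(r^{1/2}p,ry)$ gives the claim for any $\al<1/2$, with no need to invert $x\mapsto u_x$ or to bound $u_{xx}$ from below. (Your fallback claim that $u_{xx}\ge c>0$ follows ``directly from the barrier estimate combined with $w_{xx}=1$'' is likewise unjustified: $L^\infty$ closeness of convex functions controls first derivatives at interior points, not second derivatives.) If you want to salvage your outline, replace step (i) by this linear-equation barrier argument for $u^*$; the Monge-Amp\`ere-side bracketing as written cannot work.
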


\begin{proof}
We may assume that $u_p^*(0,0)=u^*(0,0)=0$. We will exploit the fact that since  $u^*$ arises as the partial Legendre transform of a solution of a Monge-Amp\`{e}re equation, it satisfies the equation (\ref{PLeqn}) and is strictly convex in the $p$-direction. 

\smallskip
There are constants $c,C$ such that 
\begin{equation}
	\half p^2 -cy\log y < u^* < \half p^2 -Cy\log y,
\end{equation}
or
\begin{equation}
	|u^*(p,y_0)-\half p^2| < Cy_0\log y_0.
\end{equation}
in each slice of fixed $y_0$ small. Since $u^*(p,y_0)$ is convex in $p$, we have
\begin{equation}
	|u^*_p(p,y_0) - p| < C\sqrt{y_0 \log y_0}
\end{equation}
for $|p|<1/2$ since this is the largest the derivative can deviate from the derivative of $p^2/2$ before it must get farther away from $p^2/2$ than $Cy_0\log y$. It follows that for any $\al <1/2$, $|u^*_p(r^{1/2}p_0,ry_0)|<Cr^\al.$
\end{proof}


Hence the partial Legendre transform $u^*$ satisfies an equation of the form of (\ref{perturbArg}), and therefore
\begin{equation}
	u^*(p,y) = u^*(p,0) + u^*_1(p)y\log y + u^*_2(p)y+ o(y^{1+\al}).
\end{equation}
Now we differentiate and examine the equations satisfied by the derivatives $\d_p^k u^*$:
\begin{equation}\label{u_p eqn}
	(u^*_p)_{pp} + y\hat{a}(x,y)(u^*_p)_{yy} + \hat{a}_{x}(x,y)yu^*_{yy}(u^*_p)_p =0,
\end{equation}
and
\begin{equation}\label{u_pp eqn}
	(u^*_{pp})_{pp} + y\hat{a}(x,y)(u^*_{pp})_{yy}+\hat{a}_x(x,y)yu^*_{yy}(u^*_{pp})_p \\
	+ \left(\hat{a}_x(x,y)yu^*_{yyp}+\hat{a}_{xx}(x,y)yu^*_{yy}u^*_{pp}\right)(u^*_{pp}) = 0.
\end{equation}
We can then obtain that $u^*$ has an expansion as follows: Since $yu^*_{yy}$ is bounded and $\hat{a}(x,y)$ is smooth, the equation satisfied by $u^*_p$, (\ref{u_p eqn}), satisfies the hypotheses of the proposition, and so $u^*_p$ also admits such an expansion to order $o(y^{1+\al})$. It follows that $yu^*_{yyp}$ is also bounded, and so is $u^*_{pp}$, so we may also apply the proposition to the equation (\ref{u_pp eqn}), and thus
\begin{equation}
	u^*_{pp}(p,y) = u^*_{pp}(p,0)+u^{*}_1{''}(p)y\log y + u^{*}_2{''}(p)y + o(y^{1+\al}).
\end{equation} 
Since $yau^*_{yy} = -u^*_{pp}$, the next two terms in the expansion for $u^*$ must be of the form $u^*_3(p)y^2\log y$ and $u^*_4(p)y^2$. Then $u^{*}_3{''}(p)y^2\log y$ and $u^{*}_4{''}(p)y^2$ are the next two terms in $u^*_{pp}$, and we can continue this process indefinitely.

\end{document}